\providecommand{\U}[1]{\protect\rule{.1in}{.1in}}
\let\orgdescriptionlabel\descriptionlabel
\renewcommand*{\descriptionlabel}[1]{%
	\let\orglabel\label
	\let\label\@gobble
	\phantomsection
	\edef\@currentlabel{#1}%
	\let\label\orglabel
	\orgdescriptionlabel{#1}%
}
\theoremstyle{thm}
\newtheorem{theorem}{Theorem}[section]
\newtheorem{lemma}[theorem]{Lemma}
\newtheorem{remark}[theorem]{Remark}
\theoremstyle{def}
\newtheorem{definition}[theorem]{Definition}
\numberwithin{equation}{section}
\DeclareMathAlphabet{\mathpzc}{OT1}{pzc}{m}{it}
\newcommand{\va}{\varphi}
\def\derat#1{{d \over dt} \hbox{\vrule width0.5pt 
		height 5mm depth 3mm${{}\atop{{}\atop{\scriptstyle t=#1}}}$}}
\newcommand{\vf}{\mathtt{V}}
\newcommand{\dd}{{\tt D}}
\newcommand{\fs}[1]{\mathbbm {#1}}
\newcommand{\eu}[1]{\EuScript {#1}}
\newcommand{\set}[1]{\left\{#1\right\}}
\newcommand{\rr}{\mathbb{R}}
\newcommand{\nn}{\mathbb{N}}
\newcommand{\id}{\mathrm{Id}}
\newcommand{\pr}{\Pi}
\newcommand{\mc}[1]{\mathtt{MC}^{#1}}
\newcommand{\bl}[1] {\mathbf {#1}}
\DeclareMathOperator{\Img}{Img}
\DeclareMathAlphabet\EuScript{U}{eus}{m}{n}
\SetMathAlphabet\EuScript{bold}{U}{eus}{b}{n}
\newcommand\opn{\ensuremath{\mathrel{\mathpalette\opncls\circ}}}
\newcommand{\opncls}[2]{
	\ooalign{$#1\subseteq$\cr
		\hidewidth\raisefix{#1}\hbox{$#1{\stylefix{#1}#2}\mkern2mu$}\cr}}
\def\raisefix#1{
	\ifx#1\displaystyle
	\raise.39ex
	\else
	\ifx#1\textstyle
	\raise.39ex
	\else
	\ifx#1\scriptstyle
	\raise.275ex
	\else
	\raise.150ex
	\fi
	\fi
	\fi
}
\def\stylefix#1{
	\ifx#1\displaystyle
	\scriptstyle
	\else
	\ifx#1\textstyle
	\scriptstyle
	\else
	\ifx#1\scriptstyle
	\scriptscriptstyle
	\else
	\scriptscriptstyle
	\fi
	\fi
	\fi
}
\DeclareFontFamily{U}{mathx}{\hyphenchar\font45}
\DeclareFontShape{U}{mathx}{m}{n}{
	<5> <6> <7> <8> <9> <10>
	<10.95> <12> <14.4> <17.28> <20.74> <24.88>
	mathx10
}{}
\newcommand{\fr}{Fr\'{e}chet }
\newcommand{\Id}{\mathop\mathrm{Id}\nolimits}
\renewcommand{\emptyset}{\varnothing}
\newcommand{\s}{\mathbf{S}}
\begin{document}

\title{Geometry via sprays on Fr\'{e}chet Manifolds}

\author{Kaveh Eftekharinasab}
\address{Algebra and Topology Department,  Institute of Mathematics of National Academy of Sciences of Ukraine, Tereshchenkivska st. 3,  01024, Kyiv, Ukraine}
\email{kaveh@imath.kiev.ua}
\thanks {}


\subjclass[2020]{58B99,  
	53C05, 57R55.
}


\keywords{\fr manifolds, sprays, connection maps, symmetric linear connections, second-order tangent bundles}

\begin{abstract}
We construct connection maps and linear symmetric connections on tangent and second-order tangent bundles for \fr manifolds using the notion of a spray. For these manifolds, we characterize linear symmetric connections on tangent bundles in terms of bilinear symmetric mappings associated with sprays.  We also provide an alternative characterization of these connections using tangent structures. Furthermore, we prove that a bijective correspondence exists between linear symmetric connections on tangent bundles and  sprays.
\end{abstract}

\maketitle
\section{Introduction}
The geometry of \fr manifolds, especially higher-order structures on tangent bundles, has seen remarkable advancements in recent years, see for instance, \cite{a,a2,a3, a5,dod,dod2,dod3,dod4,su1}.
A significant  challenge in handling these manifolds is that the general linear group of a \fr space is not a Lie group. Consequently, it cannot serve as a structure group for bundles. Another major issue is that the space of continuous linear mappings between \fr spaces does not possess a \fr space structure. Therefore, defining geometrical objects involving this space, such as connection maps, is not a straightforward generalization of the Banach case.

An approach to resolve these issues involves the use of the projective limit technique as employed in the referenced papers. In this method, we consider \fr manifolds that are projective limits of Banach manifolds. This allows the construction of objects such as structure groups and connection maps using projective limits of their corresponding Banach counterparts.

This approach has been successful in deriving numerous valuable results, as outlined in the aforementioned papers. However, in cases where a structure group is not required (e.g., for covariant differentiation, geodesics, etc.), a  question arises: are there alternative methods to address these problems? This is particularly relevant because, in the projective limit approach, constructing all objects through projective limit systems is highly technical and not always accessible.
In this work, we aim to develop an alternative approach that avoids relying on general linear groups for constructing geometric objects. Specifically, we will investigate the use of the simpler concept of a spray, which has received relatively little attention in the context of \fr manifolds.

In the effort  to define differentiable vector bundles on \fr manifolds without relying on a structure group, a fundamental question arises: how do we define the differentiability of transition mappings, given that they take values in a general linear group? In \cite{hamilton}, Hamilton presented one such definition, while Neeb proposed an alternative definition in \cite{neeb}. Furthermore, a more recent definition was introduced by Gl\"{o}ckner in \cite{gl}.  In this work, we adopt the definition from \cite{neeb}, although the other definitions could also be applied. 

The starting point is to prove that a given spray uniquely determines a connection map, a concept introduced by Dombrowski \cite{dom}. This is accomplished in Theorem \ref{th:con}, which stands as our foundational result, linking a spray with other objects, such as connections on tangent and second-order tangent bundles, facilitating further development. In our approach, it is important to note that the definition of a connection map does not involve Christoffel symbols, nor does it rely on a space of continuous linear mappings between \fr spaces. 
Indeed, we characterize a connection map  through the associated bilinear map of a given spray. Furthermore, a connection map inherits its class of differentiability from the corresponding spray.

Employing parallel transport is a widespread method for defining connections. However, this approach faces limitations in the context of \fr manifolds, since, in general, parallel transports may not exist. Consequently, in our setting, a notable challenge in the concept of a connection lies in its determination from a spray.

In Theorem \ref{th:ch}, we prove that there is a bijective correspondence between sprays and  symmetric linear connections by applying connection maps associated with sprays. Furthermore, in Theorem \ref{th:noncon}, we  prove   the same result using the notion of a tangent structure. 
An analogous  result was originally established by Ambrose, Palais, and Singer \cite{amb} for finite-dimensional smooth manifolds and has since been extended to various contexts using different techniques; see for example, \cite{mb,mb2,im}.

One of our concerns is the geometry of second-order tangent bundles, due to their significance and applications in studying ordinary differential equations (ODEs) on manifolds. A pivotal question related to these bundles is  determining the conditions under which they exhibit a vector bundle structure.  In Theorem \ref{th.iso}, we show that the presence of a spray guarantees the existence of a vector bundle structure on these bundles.
Furthermore, in Theorem \ref{th:ind}, we show that a linear symmetric connection on a tangent bundle induces a linear symmetric connection on the second-order tangent bundle, and vice versa.

It is worth mentioning that  for a category of \fr manifolds known as $ \mc{k} $-\fr manifolds, which notably includes manifolds of Riemannian metrics, the systematic use of sprays has been employed to study geodesics in \cite{k3}. 
This has motivated us to further explore the application of sprays in the study of \fr manifolds.
In the category of $ \mc{k} $-\fr manifolds, the notion of connection maps was introduced in \cite{k1} and   used to define a covariant derivative, which played a key role in proving the Morse-Sard-Brown theorem for these manifolds in \cite{k6}. Additionally, connection maps were employed in \cite{k1} to construct a vector bundle structure on second-order tangent bundles.
However, the notion of connection maps introduced in this paper is not only applicable to $ \mc{k} $-\fr manifolds but is also more general than the previously defined one. As a result, our definition of connection maps extends and enhances the results related to connection maps in \cite{k1} and \cite{k6}.
\section{Prerequisites}\label{sec:1}
We  employ the concept of differentiable maps, known as $C^k$-maps in the Michal–Bastiani sense or Keller's $C_c^k$-maps. This choice is made as it avoids introducing any topology on spaces of continuous linear maps.

By $U \opn \mathsf{T}$, we mean that $U$ is an open subset of the topological space $\mathsf{T}$.
\begin{definition}[\cite{neeb}, Definition I.2.1]
	Let $\fs{E}$ and $\fs{F}$ be locally convex spaces, $ \va: U \opn \fs{E}  \to  \fs{F}$ a mapping. Then the  derivative
	of $\va$ at $x$ in the direction $h$ is defined as 
	$$ \dd\va(x)h := \derat0 \va(x + t h) 
	= \lim_{t \to 0} {1\over t}(\va(x+th) -\va(x)) $$
	whenever it exists. The function $\va$ is called differentiable at
	$x$ if $\dd \va(x)h$ exists for all $h \in \fs{E}$. It is called 
	continuously differentiable, if it is differentiable at all
	points of $U$, and the mapping
	$$ \dd \va : U \times \fs{E} \to \fs{F}, \quad (x,h) \mapsto \dd \va(x)h $$
	is continuous. It is called a {\it $C^k$-map}, $k \in \nn \cup \{\infty\}$, 
	if it is continuous, the iterated directional derivatives 
	$ \dd^{j}\va(x)(h_1,\ldots, h_j)
	$
	exist for all integers $j \leq k$, $x \in U$ and $h_1,\ldots, h_j \in \fs{E}$, 
	and all maps $\dd^j \va : U \times \fs{E}^j \to F$ are continuous.
	Occasionally, we write $ \va' =\dd \va $, and refer to $C^{\infty}$-maps as being smooth.
\end{definition}
The differentiability in the sense of the following definition makes it possible to define a differentiable vector bundle without
requiring a structure group. 
\begin{definition}[\cite{neeb}, Definition II.3.1]\label{def:neeb}
	Let $ \fs{M} $ be a $ C^k, k\geq 1, $ locally convex manifold, and $ \mathrm{Diff}(\fs{M}) $ be the group of
	diffeomorphisms of $ \fs{M} $. Further, let $ \fs{N} $ be a $ C^k$ locally convex manifold. Although, in general, $ \mathrm{Diff}(\fs{M} ) $ does not possess a natural
	Lie group structure,  a map $ \va: \fs{N} \to \mathrm{Diff}(\fs{M})$ is said to be $ C^k$, if the following map 	is of class $ C^k$:
	\begin{equation*}
		\widehat{\va} : \fs{N} \times \fs{M} \to \fs{M} \times \fs{M}, \quad (n,x) \mapsto (\va(n)(x),\va^{-1}(n)(x)).
	\end{equation*}
\end{definition}

This definition has been employed in the development of the Lie theory for Lie groups modeled on locally convex spaces in \cite{neeb}.
\begin{definition}[\cite{neeb}, Definition I.3.8]\label{def:neebvec}
	Let $\fs{M}$ be a $ C^k $-Fr\'{e}chet manifold modeled on a \fr space $\fs{F}$, where $ k\geq1 $, and $\fs{E}$ be another \fr space. A $C^k$-{vector bundle} of type $\fs{E}$ over $\fs{M}$ is a triple $(\pr, \fs{V}, \fs{E})$, consisting a $C^k$-\fr manifold $\fs{V}$, a $ C^k $-map $\pr: \fs{V} \to \fs{M}$, and a \fr space $\fs{E}$, possessing the following properties:
	\begin{enumerate}[label={\bf (VB\arabic*)},ref=VB\arabic*]
		\itemindent=12pt
		\item $ \forall m \in \fs{M} $, the fiber $ \fs{V}_m := \pr^{-1}(m) $ is a \fr space 
		isomorphic to $\fs{E} $.
		\item  $ \forall m \in \fs{M}$, there is an open neighborhood $U$ such that a diffeomorphism
		\begin{equation*}
			\upphi_U:\pr^{-1}(U) \to U \times \fs{E} 
		\end{equation*}
		can be established, where $\upphi_U = (\pr \rvert_U, \uppsi_U)$ and $\uppsi_U : \pr^{-1}(U) \to \fs{E}$ is linear on each $\fs{V}_m$ for $m \in U$.	
	\end{enumerate}
	We then call $ U $ a trivializing subset of $ \fs{M} $ and $ \upphi_U $ a bundle chart. If $ \upphi_U $ and $ \upphi_V $ are two bundle charts and $ U \cap V \neq \emptyset $, then we obtain a diffeomorphism
	\begin{equation*}
		\upphi_U \circ  \upphi_V^{-1}: (U \cap V) \times \fs{E} \to (U \cap V) \times \fs{E}
	\end{equation*}
	of the form $ (x, v) \mapsto (x, \uppsi_{VU} (x)v) $. This leads to a map
	\begin{equation*}
		\uppsi_{UV} : U \cap V \to \bl{GL}(\fs{E})
	\end{equation*}
	for which it does not make sense to speak about smoothness because  the general linear group $ \bl{GL}(\fs{E}) $ is not a Lie group. Nevertheless, $ \uppsi_{UV} $ is of class $ C^k $ in the sense of Definition \ref{def:neeb}, as the map
	\begin{gather*}
		\widehat{\uppsi_{UV}}: (U \cap V) \times \fs{E} \to (U \cap V) \times \fs{E} \\
		(x,v) \mapsto
		(\uppsi_{UV}(x)v,\uppsi_{UV}(x)^{-1}v) =(\uppsi_{UV}(x)v,\uppsi_{VU}(x)v)
	\end{gather*}
	is of class $ C^k $. 
\end{definition}
Sprays were developed for the category of  $ \mc{k} $-\fr manifolds in \cite{k3}, where another type of differentiability, namely $ \mc{k} $-differentiability, is applied. However, as the type of differentiability does not impact the definitions and results required in this paper, we adopt them without providing the proofs.
Throughout, we 
assume that $\fs{M}$ is a $ C^k $-manifold modeled on a Fr\'{e}chet space $\fs{F}$, $ k\geq4 $, and $ \fs{M} $ admits a partition of unity of class \(C^k\).

Let $ \pr_{\fs{M}}: T\fs{M} \to \fs{M} $ and $ \pr_{T\fs{M}}: T(T\fs{M}) \to T\fs{M} $ be the tangent and double tangent bundles, respectively.
We denote the tangent map of a function $ \va $  defined on $\fs{M}$ 
by $ T\va  $  (or $ \va_{*} $) and use subscripts to indicate the fibers of bundles and the restrictions of maps to those fibers.

A $ C^{k-2}$-mapping $\vf : T\fs{M} \to T(T\fs{M})$  such that
\[
T\pi_{\fs{M}} \circ \vf =\Id_{T\fs{M}}
\]
is called
a second-order $ C^{k-2} $-vector field. 
Assume that  $ s $ is a fixed real number, and define the mapping
\begin{equation*}
	L_{T\fs{M}}: T\fs{M} \to T\fs{M},\quad v \mapsto sv.
\end{equation*}
Then,  $ (L_{T\fs{M}})_*: T(T\fs{M}) \to T(T\fs{M}) $ is the  induced map on $ T(T\fs{M}) $ and
\begin{equation*}
	(L_{T\fs{M}})_* \circ L_{T(T\fs{M})}= L_{T(T\fs{M})} \circ (L_{T\fs{M}})_*,
\end{equation*}
which follows from the linearity of $ L_{T\fs{M}} $ on each fiber.

A second-order $ C^{k-2} $-vector filed $\s : T\fs{M} \to T(T\fs{M})$  is said to be a {spray} if 
it satisfies the following condition:
\begin{enumerate}[label=$ \bl{(SP\arabic*)} $,ref=SP\arabic*]
	\itemindent=10pt
	\item \label{eq:sp1} $\s(sv) = (L_{T\fs{M}})_*(s\s(v))$ for
	all $ s \in \rr $ and $ v \in T\fs{M} $.
\end{enumerate}
A manifold that possess a $ C^k $-partition of unity admits a spray of class \(C^{k-2}\).  Important examples are Lindel\"{o}f manifolds modelled on nuclear \fr spaces, cf.
\cite[Theorem 16.10]{km}.

Let $ U \opn \fs{F} $, so that $ T(U) = U \times \fs{F} $, and
$ T(T(U)) = (U \times \fs{F} ) \times (\fs{F} \times \fs{F}) $. 
In this context, the representations of $ L_{TU} $ and $ (L_{TU})_*$ in the chart are given by the
following maps:
\begin{equation*}
	L_{TU}:(x,v) \mapsto (x,sv) \quad \text{and}\quad (L_{TU})_*:(x,v,u,w)\mapsto (x,sv,u,sw).
\end{equation*}
Therefore, we get
\begin{equation*}
	L_{T(TU)} \circ (L_{TU})_*(x,v,u,w)=(x,sv,su,s^2w).
\end{equation*}
In order to avoid confusion, if necessary, in a chart $ U $, we index objects by $ U $ to indicate their local representations.
Let $ {\vf}_U= ({\vf}_{U,1},{\vf}_{U,2}): (U \times \fs{F} ) \to \fs{F} \times \fs{F}$
be a local representation of $ \vf $, where each $ {\vf}_{U,i} $ maps $ U \times \fs{F} $
to $ \fs{F} $ with $ {\vf}_{U,1}(x,v)=v $. Then, $ \vf $ is a spray if and only if, for
all $ s \in \rr $, the following condition holds:
\begin{equation}\label{eq:1}
	{\vf}_{U,2}(x,sv)=s^2{\vf}_{U,2}(x,v).
\end{equation}
Thus, we observe that \eqref{eq:sp1} (in addition to being a second-order vector field), 
simply means that $ {\vf}_{U,2} $ is homogeneous of degree 2 in the
variable $ v $.  It follows that $ {\vf}_{U,2} $ is a quadratic
map in its second variable, and more precisely, this quadratic map is given by
\begin{equation*}
	{\vf}_{U,2} (x,v) = \dfrac{1}{2}\dd_2^2 {\vf}_{U,2} (x, 0)(v,v).
\end{equation*}
Thus, the spray is induced by a symmetric bilinear map given at each point
$ x $ in a chart by
\begin{equation}\label{eq:sp3}
	\mathbb{B}(x)= \dfrac{1}{2}\dd_2^2 {\vf}_{U,2} (x, 0),
\end{equation}
where $ \dd_2^2 $ is the second partial derivative with respect to the second variable.
Conversely, suppose a map $ x \in U \mapsto \mathbb{B}(x) $ is given, where $\mathbb{B}(x): \fs{F} \times \fs{F} \to \fs{F} $ is a symmetric bilinear $ C^{k-2} $-map. 
For
each $ v, w \in \fs{E} $, the value of $  \mathbb{B}(x) $ at $ (v, w) $ is denoted by $ \mathbb{B}(x; v, w) $ or
$ \mathbb{B}(x)(v, w) $. Define $ {\vf}_{U,2}(x, v) = \mathbb{B}(x; v,v) $. Then, $ {\vf}_{U,2} $ is quadratic in its
second variable, and a $ C^{k-2} $-spray over $ U $ can be represented by the map $ {\vf}_{U,2} $ defined by
\begin{equation*}
	{\vf}_{U,2}(x, v) = (v, (\mathbb{B}(x; v, v)) = (v, {\vf}_{U,2}(x, v)).
\end{equation*}
The mapping $ \mathbb{B} $ is called the symmetric bilinear map 
associated with the spray.

Next, we will introduce a covariant derivative; however, it's important to note that this definition is not the most general one available. There exist other definitions that are more general in nature. However, this definition is seamlessly incorporated for the applications of sprays.

Let $ \mathsf{V}^{k}(\fs{M}) $ and $ \mathcal{E}^k(\fs{M}) $ denote the sets of all $C^{k}$-vector fields and $C^k$-real-valued maps on $ \fs{M} $, respectively. Let $X$ and $Y$ belong to $ \mathsf{V}^{k}(\fs{M}) $, and $ [X,Y] $ represent the bracket product. A covariant derivative $ \nabla $ is an $\rr$-bilinear map
\begin{equation*}
	\begin{array}{cccc}
		\nabla : \mathsf{V}^{k}(\fs{M}) \times \mathsf{V}^{k}(\fs{M}) \to \mathsf{V}^{k}(\fs{M}),\quad
		(X,Y) \to \nabla_XY
	\end{array}
\end{equation*} 
such that for all $ \varphi \in \mathcal{E}^k(\fs{M})$ and $ X,Y \in \mathsf{V}^{k}(\fs{M}) $ the following hold: 
\begin{enumerate} [label={\bf (CD\arabic*)},ref=CD\arabic*]
	\itemindent=14pt
	\item \label{eq:cd2} $ \nabla_{\varphi X}Y = \varphi \nabla_{ X}Y$,
	\item \label{eq:cd3} $\nabla_{ X}(\varphi Y) = (\mathcal{L}_{X}\varphi)Y + \varphi \nabla_{ X}Y$,
	\item \label{eq:cd4} $\nabla_XY - \nabla_YX = [X,Y]$.
\end{enumerate}
The last condition is imposed to ensure the existence of a bijective correspondence between sprays and covariant derivatives, see \cite[VIII, $ \S 2 $ ]{lang}.

As proved in\cite{k3}, for a given spray $ \s $  on $ \fs{M} $, there exists a unique covariant
derivative $ \nabla $ (of class $ C^{k-2} $) such that in a chart $ U $, the covariant derivative is expressed as follows:
\begin{equation}\label{eq:cd1}
	(\nabla_XY)_U(x) = Y'_U(x)X_U(x)-\mathbb{B}_U(x)(X_U(x),Y_U(x)).
\end{equation}
\section{Associated Connection Maps}
In this section, we prove our departure key theorem, Theorem \ref{th:con}. This theorem asserts that a spray on a manifolds induces a unique connection map in the sense of \cite{eli} and \cite{vilms}. 

Consider the tangent bundle $ \pr_{\fs{M}}: T\fs{M} \to \fs{M} $ and the double tangent bundles $ \pr_{T\fs{M}}: T(T\fs{M}) \to T\fs{M} $. If  $(U,\phi)$  is a local trivialization  for $ \fs{M} $, then $ \phi(U) \times \fs{F} $ is the corresponding chart for $ T\fs{M} $, and $ (\phi(U) \times \fs{F}) \times (\fs{F} \times \fs{F}) $ represents the corresponding chart for $ T(T\fs{M}) $.

Let $ \pr_{\fs{M}}^*T\fs{M} $ be the pullback bundle induced by $ \pr_{\fs{M}} $, $ \pr_{\fs{M}}^*: \pr_{\fs{M}}^*T\fs{M} \to T\fs{M}$  its projection,  and $ (\phi(U) \times \fs{F}) \times \fs{F} $ 
the corresponding chart for $ \pr_{\fs{M}}^*T\fs{M} $.

In charts, the tangent map $ T\pr_{T\fs{M}} $ takes the following form: $$ (\phi{(U)} \times \fs{F}) \times (\fs{F} \times \fs{F}) \to  \phi{(U)} \times \fs{F}, \quad T\pr_{TU}(u,x,y,z) = (u,y).$$
Furthermore, the following diagram is commutative: 
\begin{equation} \label{dig:1}
	\begin{tikzcd}
		{(\phi{(U)}\times\mathbb{F})\times (\mathbb{F} \times \mathbb{F})} && {\phi{(U)}\times\mathbb{F}} \\
		{\phi{(U)}\times \mathbb{F}} && \phi{(U)}
		\arrow["{T\pr_U}", from=1-1, to=1-3]
		\arrow["{\pr_{U}}", from=1-3, to=2-3]
		\arrow["{\pr_{U}}", from=2-1, to=2-3]
		\arrow["{\pr_{TU}}"', from=1-1, to=2-1]
	\end{tikzcd}
\end{equation}

Define the mapping $ \mathcal{H}: T(T\fs{M}) \to \pr_{\fs{M}}^*T\fs{M} $ such that 
\[T\pr_{T\fs{M}} = \pr_{\fs{M}}^* \circ \mathcal{H}.\]
Then, the diagram in \eqref{dig:1} gives rise to the following diagram:
\begin{equation*}
	\begin{tikzcd}
		{(\phi{(U)} \times \mathbb{F})\times \mathbb{F} \times \mathbb{F}} && {(\phi{(U)} \times \mathbb{F})\times \mathbb{F}  } && {\phi{(U)} \times \mathbb{F}} \\
		{\phi{(U)}\times\mathbb{F}} && {\phi{(U)}\times\mathbb{F}} && \phi{(U)}
		\arrow["{\mathrm{Id}}", from=2-1, to=2-3]
		\arrow["{\pr_{U}^*}", from=1-3, to=2-3]
		\arrow["{\pr_{TU}}", from=1-1, to=2-1]
		\arrow["{\mathcal{H}}_U", from=1-1, to=1-3]
		\arrow["{\pr_{U}}", from=2-3, to=2-5]
		\arrow["{\pr_{U}^*}", from=1-3, to=1-5]
		\arrow["{\pr_{U}}", from=1-5, to=2-5]
	\end{tikzcd}
\end{equation*}
with $ \mathcal{H}_U(u,x,y,z) = (u,x,y)$ and $ \pr_{U}^*(u,y,z)=(u,z) $. 
Let
$ \overline{\mathcal{H}} = \pr_{\fs{M}}^* \circ \mathcal{H}$, then
\[
\overline{\mathcal{H}} =T\pr_{T\fs{M}}. 
\]
Consider another chart $ V $ and an isomorphism $ \va : U \to V $. For a given spray $ \s $, we require a transformation rule for a change of charts under $ \va $. Locally, the map $ T\va $ takes the form:
\begin{equation*}
	\upphi: \phi({U}) \times \fs{F}  \to \fs{F} \times \fs{F}, \quad \upphi(x,v)=(\va(x),\va'(x)v).
\end{equation*}
Then, the change of chart for $ TT\va $ is given  by
\begin{gather*}
	\widehat{\upphi}: \phi{(U)} \times \fs{F} \times \fs{F} \times \fs{F} \to \phi(V) \times \fs{F} \times \fs{F} \times \fs{F}, \quad \widehat{\upphi}=(\upphi, \upphi').
\end{gather*}
The derivative  $ \upphi' $ is given as follows:
\[
\upphi'(x,v)= \begin{bmatrix}
	\va'(x) & 0 \\
	\va''(x)v & \va'(x)
\end{bmatrix}\mathbf{v}, \quad
\mathbf{v}=\begin{bmatrix} u \\ v \end{bmatrix}, \; u,v \in \fs{F}.
\]
Thus,
\begin{equation}
	\widehat{\upphi}(x, v, u, w) \mapsto \big(\va(x), \va'(x)u, \va'(x)v, \va''(x)(u,v) + \va'(x)w \big).
\end{equation}
As mentioned in Section \ref{sec:1}, in a chart $ U $, the local representative of $ \s $ can be expressed as $ {\s_U}=({\s_{U,1}}, {\s_{U,2}}) $, where $ {\s_{U,1}}(x,v)=v $ and $ {\s_{U,2}} $ satisfies Equation \eqref{eq:1}. Similarly, in a chart $ V $, the local representatives of $ \s $ are denoted as ${\s_V}=({\s_{V,1}}, {\s_{V,2}}) $. These representatives satisfy the following equation:
\begin{equation}
	\s_V(\va(x),\va'(x)v) = \big(\va'(x)v, \va''(x)(v,v)+\va'(x)\s_{U,2}(x,v)\big),
\end{equation} 
since $ w=  \s_{U,2}(x,v)$, and $ \s_{U,1}(x,v)=\s_{V,1}(x,v)= v $, we can set $ u=v $. Thus,
\begin{gather}\label{gat:1}\nonumber
	\s_{V,2}\big(\va(x), \va'(x)v \big) = \va''(x)(v, v)+ h'(x)\s_{U,2}(x,v), \\ 
	\mathbb{B}_U\big(\va(x);\va'(x)v,\va'(x)w \big)=\va''(x)(v,w)+\va'(x)\mathbb{B}_V\big(x;v,w \big). 
\end{gather}
\begin{lemma}\label{lem:1}
	Let $ \s $ be a given spray on $ \fs{M} $, and let $ \fs{B} $ be the symmetric bilinear map associated with $ \s $.
	Then, there exists a unique vector bundle morphism 
	\begin{equation} \label{eq:l}
		\mathcal{L}: T(TM) \to \pr_{\fs{M}}^*T\fs{M}
	\end{equation}
	over $ T\fs{M} $
	such that over a chart $ U $, we get
	\begin{equation}
		\mathcal{L}_U(x,v,u,w) = (x,v,w-\fs{B}_U(x;v,u)).
	\end{equation} 
	Furthermore, there exists a unique vector bundle morphism $ \overline{\mathcal{L}} $ such that
	\begin{equation}\label{eq:3}
		\overline{\mathcal{L}}_U(x,v,u,w) = (x,w-\fs{B}_U(x;v,u)).
	\end{equation} 
\end{lemma}
\begin{proof}
	Let $ V $ be another chart and $ \va : U \to V $ an isomorphism. As explained earlier, the transformation rule for $ (TT\va)_U $ under a change of chart is given by
	\begin{gather*}
		\widehat{\upphi}: \phi(U) \times \fs{F} \times \fs{F} \times \fs{F} \to \phi(U) \times \fs{F} \times \fs{F} \times \fs{F}\\  \widehat{\upphi}(x, v, u, w) \mapsto \big(\va(x), \va'(x)u, \va'(x)v, \va''(x)(u,v) + \va'(x)w \big).
	\end{gather*}
	Thus,
	\begin{equation}
		\mathcal{L}_V \circ \widehat{\upphi}(x, v, u, w) =\big(\va(x), \va'(x)u, \va'(x)w \big).
	\end{equation}
	Because $ \va''(x)(u,v) $ drops in the fourth coordinates. Therefore, the family of mappings $ {\mathcal{L}_U} $
	defines a vector bundle morphism over $ T\fs{M} $, and its local expression reveals that it is indeed a vector bundle isomorphism over $ U $. This concludes the proof of the first part. 
	To prove the second part let 
	\begin{equation}\label{eq:lbar}
		\overline{\mathcal{L}} = \pr_{\fs{M}}^* \circ \mathcal{L}
	\end{equation}
	Then, it is a vector bundle morphism satisfying \eqref{eq:3}.
\end{proof}
\begin{theorem}\label{th:con}
	Let $ \s $ be a given spray on $ \fs{M} $, with its associated covariant derivative $ \nabla $.
	Then there exists a unique vector bundle morphism 
	\begin{equation}
		K : T(T\fs{M}) \to T\fs{M}
	\end{equation} 
	such that $ \nabla = K \circ T $, and $ \forall X,Y \in \mathsf{V}^{k}(\fs{M}) $ 
	the following diagram is commutative:
	\[\begin{tikzcd}
		{T\mathbb{M}} & {} & {T(T\mathbb{M})} \\
		{\mathbb{M}} && {T\mathbb{M}}
		\arrow["TX", from=1-1, to=1-3]
		\arrow["K", from=1-3, to=2-3]
		\arrow["{\nabla_YX}", from=2-3, to=2-1]
		\arrow["Y"', from=2-1, to=1-1]
	\end{tikzcd}\]
\end{theorem}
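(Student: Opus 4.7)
My plan is to read the connection map directly off the bundle morphism $\overline{\mathcal{L}}$ produced by Lemma~\ref{lem:1}. In the chart $U$, set
\[
K_U(x,v,u,w) = \big(x,\; w - \fs{B}_U(x;v,u)\big),
\]
so that $K$ coincides with $\overline{\mathcal{L}}$ regarded as a map $T(T\fs{M})\to T\fs{M}$. Lemma~\ref{lem:1} already establishes that these local formulas glue to a well-defined vector bundle morphism over $T\fs{M}\to\fs{M}$, because the troublesome second-derivative term $\va''(x)(u,v)$ arising in the chart change $\widehat{\upphi}$ is cancelled precisely by the transformation rule \eqref{gat:1} for $\fs{B}_U$; the symmetry of $\fs{B}$ is exactly what makes this cancellation occur in both arguments.

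\textbf{Commutativity of the diagram.} Next I would verify $K\circ TX\circ Y = \nabla_Y X$ for $C^{k-1}$-vector fields $X,Y$ by local computation. In a chart $U$, a section $X:\fs{M}\to T\fs{M}$ has the form $x\mapsto(x,X_U(x))$, and its tangent map reads
\[
TX(x,v) = \big(x,\, X_U(x),\, v,\, X'_U(x)v\big).
\]
Evaluating at $Y(x)=(x,Y_U(x))$ gives $TX(Y(x))=(x,X_U(x),Y_U(x),X'_U(x)Y_U(x))$. Applying $K_U$ then yields
\[
K\big(TX(Y(x))\big) = \big(x,\; X'_U(x)Y_U(x) - \fs{B}_U(x;X_U(x),Y_U(x))\big),
\]
which, by the symmetry of $\fs{B}$, agrees with the covariant derivative \eqref{eq:cd1},
\[
(\nabla_Y X)_U(x) = X'_U(x)Y_U(x) - \fs{B}_U(x)\big(Y_U(x),X_U(x)\big).
\]
This is the identity $\nabla = K\circ T$ claimed in the theorem.

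\textbf{Uniqueness and the main difficulty.} For uniqueness, suppose $K'$ is another vector bundle morphism making the diagram commute. I would argue that for any point $(x_0,v_0,u_0,w_0)$ in a trivialization of $T(T\fs{M})$ one can produce local vector fields realizing this point: take $Y$ constant equal to $u_0$ near $x_0$ and $X$ of the form $x\mapsto v_0 + L(x-x_0)$ where $L\in\mathcal{L}(\fs{F},\fs{F})$ satisfies $Lu_0=w_0$. Then $TX(Y(x_0))=(x_0,v_0,u_0,w_0)$, and the commutativity condition forces $K'$ to coincide with $K$ at this point; since such points are arbitrary, $K'=K$. The step I expect to be most delicate is not the local algebra but the assertion in Lemma~\ref{lem:1} that $K$ is a morphism in the sense of Definition~\ref{def:neebvec}: because $\mathbf{GL}(\fs{F})$ is not a Lie group, one cannot appeal to transition-function smoothness directly, and instead must check that the map $\widehat{K_{UV}}$ built from $K_U$ and $K_V$ together with its fiberwise inverse is smooth in the Michal--Bastiani sense. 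Once Lemma~\ref{lem:1} is in hand, everything else in the proof of Theorem~\ref{th:con} reduces to the local computation above.
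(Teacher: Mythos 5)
Your construction is exactly the paper's: the paper also takes $K=\overline{\mathcal{L}}$ from Lemma~\ref{lem:1} and simply asserts that its local form $(u,w)\mapsto w-\fs{B}_U(x;v,u)$ fulfills the conditions of the theorem, so your chart computation of $K\circ TX\circ Y=\nabla_YX$ and your uniqueness argument are precisely the verifications the paper leaves implicit. The only point to tidy is that your realizing vector fields $X(x)=v_0+L(x-x_0)$, $Y\equiv u_0$ only reach points with $u_0\neq 0$ (or $w_0=0$), so uniqueness at the remaining points should be concluded by continuity of $K'$ and density of the set $\{u\neq 0\}$ in the fiber.
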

\begin{proof}
	The sought-after morphism is, in fact, $ \overline{\mathcal{L}} $, defined by \eqref{eq:lbar} and \eqref{eq:l}, the existence and uniqueness of which were established in Lemma \ref{lem:1}. In a chart $ U $, the expression for $ K = \overline{\mathcal{L}} $ takes the following form:
	\begin{gather*}
		K_U(x,v): \fs{F} \times \fs{F} \to \fs{F}, \quad
		(u,w) \mapsto w-\fs{B}_U(x;v,u)
	\end{gather*}
	that fulfills the conditions of the theorem. 
\end{proof}
The expression $ \nabla = K \circ T  $ indicates that $ K $ is  a connection map in the sense defined  by Eliasson \cite{eli} and Vilms \cite{vilms}. We call it a connection map associated with the spray. 
This connection map finds complete characterization through the associated bilinear map  $ \fs{B} $. The differentiability of the spray implies that $ K $ is of class $ C^{k-2} $. 

The linearity of $ \fs{B} $ with respect to the second variable implies that $ K $ is linear on the fibers of $ T(T\fs{M}) $, and therefore $ K $ is a linear connection map. Furthermore, the symmetry of the connection map $ K $ in variables $ u $ and $ v $ results from the symmetric nature of $ \fs{B} $.
\begin{remark}
	We define a connection map through the associated covariant of a spray. A natural question arises: can a connection map uniquely determine a spray? Specifically, we inquire whether a covariant derivative alone can determine a spray. In \cite[VIII, $ \S 2 $]{lang}, it was highlighted that in the case of Banach manifolds if a manifold admits a cutoff function and we employ a torsion-free covariant derivative, leading to the existence of the symmetric bilinear mapping $ \fs{B}_U $ for each chart $ U $, then a bijective correspondence emerges between sprays and covariant derivatives.
	
	The situation remains consistent for \fr manifolds, since we apply torsion-free covariant derivatives. Regarding cutoff function, \fr manifolds, especially nuclear manifolds, that admit a differentiable partition of unity, possess cutoff functions.
	Thus, for our purposes, it suffices to assume that manifolds admit differentiable partitions of unity of class at least  \(C^4\). 
\end{remark}
Next, because of the significance of second-order tangent bundles (see \cite{a,dod3}) and the feasibility of addressing them through our approach, we will delve into a detailed examination. Initially, we demonstrate that a connection map induces a vector bundle structure on the second-order tangent bundle.

Recall that the second-order tangent bundle $ T^2\fs{M} $ is the set of all 2-jets of $ \fs{M} $.
The space $ T^2\fs{M} $ has the fiber bundle structure over $ \fs{M} $. Its bundle projection $ \pr^2_{\fs{M}} : T^2\fs{M} \to \fs{M}$ is defined by $ \pr_{\fs{M}}^2(j^2_x(\gamma))=x$, where  $ \gamma : \rr \to \fs{M} $
is a $ C^2 $-mapping  such that $ \gamma(0)=x $. However, it is well known that, in general, $ T^2\fs{M} $ is not a vector bundle.

Let $ \fs{N} $ be a $ C^k $-\fr manifold,  and let $ \va : \fs{M} \to \fs{N} $ be a $ C^k $-mapping, $ k\geq 2 $.
The second-order tangent map $ T^2\va = T^2M \to T^2N $ is defined by $ j^2_x\gamma \mapsto j^2_{\va(x)} (\va \circ \gamma) $. 
Despite being a fiber bundle morphism, $ T^2\va $ is not always linear on the fibers of $ T^2\fs{M} $ and $ T^2\fs{N} $ due to the involvement of second-order derivatives.
Now, we prove that the associated connection map of a spray induces a vector bundle structure on $ \fs{M} $.
\begin{theorem}\label{th.iso}
	Let $ \s $ be a given spray on $ \fs{M} $, with $ K $ being its associated connection map. Then, $ K $ not only induces a vector bundle structure on $ T^2\fs{M} $ over $ \fs{M} $ but also leads to an isomorphism between this vector bundle and the direct sum vector bundle $ T\fs{M} \oplus T\fs{M}$.
\end{theorem}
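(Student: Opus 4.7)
The plan is to realize $T^2\fs{M}$ as a subset of the double tangent bundle $T(T\fs{M})$ and then use the connection map $K$ to linearize the nonlinear part of the second-order chart-change. A $2$-jet $j^2_x\gamma$ corresponds to the vector $\gamma''(0)\in T(T\fs{M})$, which in a chart $U$ has the local form $(x,v,v,w)$ with $x=\gamma(0)$, $v=\gamma'(0)$, so $T^2\fs{M}$ is identified with $\{(x,v,u,w)\in T(T\fs{M}):u=v\}$. Define the candidate isomorphism
\[
\Phi\colon T^2\fs{M}\longrightarrow T\fs{M}\oplus T\fs{M},\qquad j^2_x\gamma\longmapsto\bigl(\pr_{T\fs{M}}(\gamma''(0)),\,K(\gamma''(0))\bigr),
\]
whose local expression, by the formula for $K=\overline{\mathcal{L}}$ from Lemma \ref{lem:1}, is
\[
\Phi_U(x,v,v,w)=\bigl((x,v),\,(x,w-\fs{B}_U(x;v,v))\bigr).
\]
The first component records the underlying velocity and the second is the ``covariant acceleration'' seen through $K$.

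Next I check that $\Phi$ is globally well defined by combining the chart-change formula for $TT\va$ displayed just before Lemma \ref{lem:1} with the transformation law for the bilinear map $\fs{B}$ associated to the spray. The $TT\va$ rule sends $(x,v,v,w)$ to $\bigl(\va(x),\va'(x)v,\va'(x)v,\va''(x)(v,v)+\va'(x)w\bigr)$, while the spray transformation rule gives, after correct bookkeeping of indices, $\fs{B}_V(\va(x);\va'(x)v,\va'(x)v)=\va''(x)(v,v)+\va'(x)\fs{B}_U(x;v,v)$. Subtracting, the nonlinear symbol $\va''(x)(v,v)$ cancels and leaves $\va'(x)\bigl(w-\fs{B}_U(x;v,v)\bigr)$. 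Hence both components of $\Phi$ transform linearly by $\va'(x)$ under any change of chart, so $\Phi$ is a globally defined fiber-preserving map over $\fs{M}$.

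I then transport the vector bundle structure of $T\fs{M}\oplus T\fs{M}$ along $\Phi$, declaring $\Phi$ to be a bundle isomorphism. Local trivializations $\upphi_U\times\upphi_U$ of the Whitney sum pull back through $\Phi^{-1}$ to local trivializations of $T^2\fs{M}$, and the computation above shows that the resulting transition cocycle has the form $(x,\alpha,\beta)\mapsto(\va(x),\va'(x)\alpha,\va'(x)\beta)$. Since $\va$ and $\va^{-1}$ are of class $C^{k}$, the ``hat map'' of Definition \ref{def:neeb} for these transitions is of class $C^{k-1}$, verifying \textbf{(VB.1)} and \textbf{(VB.2)} of Definition \ref{def:neebvec}. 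The smoothness of $\Phi$ and $\Phi^{-1}$ themselves follows from the smoothness of $\fs{B}$ (inherited from the spray) and of $K=\overline{\mathcal{L}}$ already established in Lemma \ref{lem:1}.

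The main obstacle is the careful bookkeeping needed for the cancellation $\va''(x)(v,v)+\va'(x)w-\fs{B}_V(\va(x);\va'(x)v,\va'(x)v)=\va'(x)\bigl(w-\fs{B}_U(x;v,v)\bigr)$, since the spray transformation rule mixes the second derivative of the change of chart with both $\fs{B}_U$ and $\fs{B}_V$. Once that identity is in hand, the Whitney sum $T\fs{M}\oplus T\fs{M}$ already carries a vector bundle structure over $\fs{M}$ in the sense of Definition \ref{def:neebvec}, and $T^2\fs{M}$ inherits one tautologically through $\Phi$, which is then by construction the asserted vector bundle isomorphism.
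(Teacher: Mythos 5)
Your proposal is correct and takes essentially the same route as the paper: the paper's map $\Upsilon=(\pr_{T\fs{M}},\overline{\mathcal{H}},K)$, once restricted to $T^2\fs{M}\subset T(T\fs{M})$ where the first two components coincide, is exactly your $\Phi=(\pr_{T\fs{M}},K)$. Your explicit chart-change cancellation $\va''(x)(v,v)+\va'(x)w-\fs{B}_V(\va(x);\va'(x)v,\va'(x)v)=\va'(x)\bigl(w-\fs{B}_U(x;v,v)\bigr)$ is precisely the verification the paper delegates to \cite{k1}.
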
 
\begin{proof}
	The proof is identical to the one in \cite[Theorem 4.1]{k1}. Just, in our setting, we need to consider  the following mapping:
	\begin{equation}\label{eq:iso}
		\Upsilon \coloneq (\pr_{T\fs{M}}, \overline{\mathcal{H}},  K) : T(T\fs{M}) \to T\fs{M} \oplus T\fs{M} \oplus T\fs{M}
	\end{equation}
	to obtain an isomorphism of fiber bundles over $ \fs{M} $.
\end{proof}
Conversely, the existence of a vector bundle structure on $ T^2\fs{M} $ implies the presence of a connection map.
Let $ \mathbf{V}T^2\fs{M} $ be the vertical subbundle of $ T(T^2M) $. The connection map can be obtained by identifying $ \mathbf{V}T^2\fs{M} $ with the pullback of $ T^2\fs{M} $ over itself, denoted as $ (\pr^{2})^*T^2 \fs{M}$, and applying the natural mapping of the pullback, which maps $ (\pr^{2})^*T^2 \fs{M}$ onto $ T^2 \fs{M} $.

In \cite{dod}, it was noted that the induced vector bundle structure on second-order tangent bundles is contingent upon the selection of a connection map. Nevertheless, it was proved that this vector bundle structure remains invariant under conjugate connection maps concerning diffeomorphisms of a manifold. We encounter a similar situation in our context. For the sake of completeness, we will briefly discuss this matter, as the arguments are nearly identical.

We will adopt the concept of the conjugacy of connection maps as introduced in \cite{vas}. Consider two sprays $ \s_1 $ and $ \s_2 $ on $ \fs{M} $, with their associated connection maps being $ K_1 $ and $ K_2 $, respectively. Let $ \upmu $ be a diffeomorphism of $ \fs{M} $. The connection maps $ K_1 $ and $ K_2 $ are called as $ \upmu $-conjugate (or $\upmu$-related) if they satisfy the following condition:
\begin{equation}\label{eq:conj}
	T\upmu \circ K_1 = K_2 \circ T(T \upmu).
\end{equation}
Let $ (U,\phi) $ and $ (V,\psi) $ be local trivializations of $ \fs{M} $, and $ \upmu_{UV} \coloneq \psi \circ \upmu \circ \phi^{-1} $ the local representation of $ T^2\upmu $. Then, locally Equation \eqref{eq:conj} becomes
\begin{gather}\label{eq:conjloc}
	\dd \upmu_{UV}(\phi(x))\big(-\fs{B}_U(\phi(x))(u,u) \big) = \\ \nonumber
	\dd^2 \uptau_{UV}\big(\phi(x)\big)(u,u)-\fs{B}_V \big(\upmu_{UV}(\phi(x)) \big) \Big(\dd \upmu_{UV}(\phi(x))(u), \dd \upmu_{UV}(\phi(x))(u)\Big),
\end{gather}
for every $ (x,u)  \in U \times \fs{F}$.
We require the local representations of $ \upmu $ and its corresponding tangent map $ T\upmu $, as well as the local representations of the connection maps. Detailed computations can be found in \cite[$ \S 1.5.6 $]{dod2}, and we will omit them here. It is important to note that in \cite[$ \S 1.5.6 $]{dod2}, the local representations of connection maps are provided in terms of Christoffel symbols $ \Gamma $, and we must apply the equality
\begin{equation}
	\Gamma(x)(u,v) = - \fs{B}(x)(u,v).
\end{equation}
wherever necessary.
Again, by using the same computations as presented in \cite[$ \S 8.3 $]{dod2}, we determine the  local representation of $ T^2\upmu $ as follows:
define the mapping  
\begin{gather*}
	\lambda_{U}: \pr_{\fs{M}}^2(U) \to U \times \fs{F} \times \fs{F} \\
	\lambda_{U} (j^2_x \gamma) = \Big(x, (\phi \circ \gamma)'(0), (\phi \circ \gamma)''(0) 
	-\fs{B}(\phi(x)) \big( (\phi \circ \gamma)'(0), (\phi \circ \gamma)'(0)\big)
	\Big),
\end{gather*}
for every $ x \in  (\pr_{\fs{M}}^2)^{-1}(U)$. Then $ (U, \phi, \Phi_U^2) $ is the local trivializations of
$ T^2\fs{M} $. Here, the diffeomorphism $ \Phi_U^2: (\pr_{\fs{M}}^2)^{-1}(U) \to \phi(U) \times \fs{F} \times \fs{F} $ is defined by
$$
\Phi_U^2 \coloneq (\phi \times \id_{\fs{F}} \times \id_{\fs{F}}) \circ \lambda_U.
$$
Then the local representation turns into  
\begin{gather}\label{eq:find}
	\Big (\Phi^2_{V, \upmu(x)} \circ T^2_x \upmu \circ (\Phi^2_{U, (x)})^{-1} \Big)(h,k) = \\ \nonumber
	= \Big(\dd \upmu_{UV}(y)(h), \dd \upmu_{UV}(y)(k)+ \dd \upmu_{UV}(y)\fs{B}_U(y)(h,h)
	+ \dd^2 \upmu_{UV}(y)(h,h)- \\ \nonumber
	-\fs{B}_V (\upmu_{UV}(y)) \big( \dd \upmu_{UV}(y)(h), \dd \upmu_{UV}(y)(h)
	\big)
	\Big),
\end{gather}
for every $ (y,h,k) \in \phi(U) \times \fs{F} \times \fs{F} $ and $ x = \phi^{-1}(y) $. 
If $ K_1 $ and
$ K_2 $ are $ \upmu $-conjugated, then by combining \eqref{eq:find} and \eqref{eq:conjloc} we get
\begin{gather}\label{eq:fin}
	\Big (\Phi^2_{V, \upmu(x)} \circ T^2_x \upmu \circ (\Phi^2_{U, (x)})^{-1} \Big)(h,k) 
	= \big( \dd \upmu_{UV}(\phi(x))(h), \dd \upmu_{UV}(\phi(x))(k)
	\big)
\end{gather}
which implies that $ T^2\upmu : T^2 \fs{M} \to T^2\fs{M} $ is linear. Moreover, it implies that
$$
x \mapsto \Phi^2_{V, \upmu(x)} \circ T^2_x \upmu \circ (\Phi^2_{U, (x)})^{-1}
$$
is differentiable, therefore, $ (T^2\upmu, \upmu) $ is a vector bundle isomorphism.
Thus, the vector bundle structures induced by $ K_1 $ and $ K_2 $ on $ T^2\fs{M} $ are isomorphic.
\section{Associated Linear Symmetric Connection}
In this section, our initial concern is to prove the existence of an injective correspondence between sprays and linear symmetric connections.
We will adapt the notion of nonlinear connection due to Barthel \cite{ba} and Vilms \cite{vilms}, originally formulated for Banach manifolds.

Henceforth, for the sake of simplicity, we assume that  $\fs{M}$ is a  smooth Fr\'{e}chet manifold. 

Define the mapping 
\begin{gather}
	\pr_{\fs{M}}!: T(T\fs{M}) \to \pr_{\fs{M}}^*T\fs{M} \\ \nonumber
	\pr_{\fs{M}}^*(v_x)=(x, T\pr_{\fs{M},x}(v_x), \quad \forall v_x \in T_x(T\fs{M})).
\end{gather}
The mapping $ \pr_{\fs{M}}! $ is smooth and onto with 
$ \ker \pr_{\fs{M}}! = \ker T\pr_{\fs{M}} = \mathbf{V}T\fs{M}$ (the vertical subbundle of $ T(T\fs{M}) $).
Consequently, we have the following exact sequence of vector bundles over $ T\fs{M} $:
\begin{equation}\label{eq:exact}
	0 \to \mathbf{V}T\fs{M} \xrightarrow{\eu{I}} T(T\fs{M})\xrightarrow{\pr_{\fs{M}}!} \pr^*_{\fs{M}} T\fs{M} \to 0,
\end{equation}
where $ \eu{I} $ is the canonical inclusion. A smooth nonlinear connection on $ T\fs{M} $ is a splitting $ \eu{C} $ on the left
of the exact sequence \eqref{eq:exact}. It means that $ \eu{C} : T(T\fs{M}) \to \mathbf{V}T\fs{M}$ is a map such that
\begin{equation*}
	\eu{C} \circ \eu{I} =\Id_{\mathbf{V}T\fs{M}}. 
\end{equation*}
The horizontal bundle $ \mathbf{H}T\fs{M}$ is defined as the kernel $\ker \eu{C},  \mathbf{H}T\fs{M} = \ker \eu{C}$. As in the Banach case (see \cite{lang}), we can use similar arguments to prove that in a presence of a connection  we have $ T(T\fs{M})= \mathbf{H}T\fs{M} \oplus \mathbf{V}T\fs{M} $. Conversely, the existence of a subbundle $ \mathbf{H}T\fs{M} $ satisfying
$ \mathbf{H}T\fs{M} \oplus \mathbf{V}T\fs{M} $ implies the existence of a nonlinear connection on $ T\fs{M} $.
A connection  $ \eu{C} $ is called linear, if in addition, 
\begin{equation*}
	\eu{C}: T(T\fs{M}) \to \mathbf{V}T\fs{M} \to T(T\fs{M}) 
\end{equation*}
is $ (T\pr_{\fs{M}},T\pr_{\fs{M}}) $ fiberwise linear. 

It is well known that $ T(T\fs{M}) $ possesses two vector bundle structure as a bundle over $ T\fs{M} $:
one induced by $ T\pr_{T\fs{M}} $, and the other with  $ T\fs{M} $ as the base manifold. There is a canonical involution $ \mathtt{Inv} : T(T\fs{M}) \to  T\fs{M}$ that interchanges two structures. 
In \cite[Theorem 1]{sym}, it was proved that for a finite dimensional manifold $ M $, the canonical involution $ \mathtt{Inv} $ is an isomorphism between the primarily and the secondary vector bundle structures on $ T{M} $
such that the following diagram commutes:
\[\begin{tikzcd}
	{T(T{M})} & {} & {T(T{M})} \\
	{T{M}} & {} & {T{M}}
	\arrow["{\mathtt{Inv}}", from=1-1, to=1-3]
	\arrow["{T\Pi_{M}}", from=1-3, to=2-3]
	\arrow["{\mathrm{Id}}", from=2-1, to=2-3]
	\arrow["{\Pi_{TM}}"', from=1-1, to=2-1]
\end{tikzcd}\]
The mapping $ \mathtt{Inv} $ is self-inverse, and its fixed points precisely correspond to symmetric vectors ($ v \in T(T{M}) $ is called symmetric if $ T\pr_{T{M}}(v) = \pr_{T{M}}(v) $). We can similarly prove that the assertions made in \cite[Theorem 1]{sym} also hold true for infinite-dimensional \fr manifolds.
We call a connection $ \eu{C} $ symmetric if 
\begin{equation}
	\eu{C} = \eu{C} \circ \mathtt{Inv}.
\end{equation}
In the upcoming theorem, we prove that a spray uniquely determines a linear symmetric connection, and vice versa. The proof of this theorem hinges on establishing the relationship between a connection and the associated connection map of a given spray.
\begin{theorem}\label{th:ch}	
	Suppose $\s  $ is a spray on $ \fs{M} $. Then, there exists a unique linear symmetric connection on $ T\fs{M} $ that is entirely characterized
	by the associated symmetric bilinear mappings of $ \s $. Conversely, if $ \eu{C} $ is a linear symmetric connection on
	$ T\fs{M} $, then there exists a unique spray on $ \fs{M} $ whose associated connection map is determined by $ \eu{C} $.  
\end{theorem}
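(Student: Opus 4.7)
I would route both directions through the connection map $K$ already produced by Theorem~\ref{th:con} and exploit the canonical identification of the vertical subbundle $\mathbf{V}T\fs{M}$ with the pullback $\pr_{\fs{M}}^* T\fs{M}$, which in local coordinates sends $(x,v,0,w) \in \mathbf{V}T\fs{M}$ to $(x,v,w) \in \pr_{\fs{M}}^*T\fs{M}$. The idea is that a linear symmetric connection $\eu{C}$ and a connection map $K$ encode the same data in two guises, so the theorem reduces to the already established bijection between sprays and symmetric bilinear families $\fs{B}$.

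For the forward direction, given a spray $\s$ with associated symmetric bilinear family $\fs{B}$, I would define the candidate splitting $\eu{C} : T(T\fs{M}) \to \mathbf{V}T\fs{M}$ by the local prescription
\begin{equation*}
\eu{C}_U(x,v,u,w) = (x, v, 0,\; w - \fs{B}_U(x;v,u)).
\end{equation*}
Three things require checking. Well-definedness across charts follows from the spray transition law~\eqref{gat:1}: the $\va''(x)(u,v)$ term appearing in the chart-change formula for $T(T\va)$ is precisely cancelled by the corresponding correction in the transition law for $\fs{B}$. Next, $\eu{C} \circ \eu{I} = \Id_{\mathbf{V}T\fs{M}}$ is immediate because purely vertical tangents have $u=0$ and $\fs{B}_U(x;v,0) = 0$, so $\eu{C}$ genuinely splits~\eqref{eq:exact}. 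Finally, linearity in both vector bundle structures of $T(T\fs{M})$ is a direct consequence of the bilinearity of $\fs{B}_U(x;\cdot,\cdot)$, and the identity $\eu{C} = \eu{C} \circ \mathtt{Inv}$ follows from symmetry of $\fs{B}_U$ once one uses that in coordinates $\mathtt{Inv}$ swaps the two middle slots.

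Conversely, if $\eu{C}$ is any linear symmetric connection, then writing it in a local trivialization and using that it vanishes on horizontal complements gives a description of the form $\eu{C}_U(x,v,u,w) = (x, v, 0,\; w - \widetilde{\fs{B}}_U(x)(v,u))$ for some map $\widetilde{\fs{B}}_U(x) : \fs{F}\times\fs{F} \to \fs{F}$. Linearity in the two structures on $T(T\fs{M})$ forces bilinearity of $\widetilde{\fs{B}}_U(x)$, while $\mathtt{Inv}$-invariance forces its symmetry; the compatibility of the $\widetilde{\fs{B}}_U$ under chart changes required for $\eu{C}$ to be globally defined is precisely~\eqref{gat:1}. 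I then set $\s_U(x,v) := (v,\; \widetilde{\fs{B}}_U(x)(v,v))$, observe the homogeneity~\eqref{eq:1} from bilinearity, and obtain a spray whose connection map coincides with $\eu{C}$ by construction. Uniqueness in both directions is automatic: the bijection $\s \leftrightarrow \fs{B}$ recalled in Section~\ref{sec:1} pins down $\eu{C}$, while the local form above recovers $\fs{B}$ uniquely from $\eu{C}$. I expect the symmetry verification to be the most delicate step, since it relies on a careful coordinate description of the canonical involution $\mathtt{Inv}$ in the \fr setting---this is where one invokes the analogue of \cite[Theorem~1]{sym} noted in the excerpt, along with the two distinct vector bundle structures on $T(T\fs{M})$ that must be treated on equal footing.
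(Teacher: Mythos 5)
Your proposal is correct and follows essentially the same route as the paper: both directions reduce to the local formula $\eu{C}_U(x,v,u,w)=(x,v,0,w-\fs{B}_U(x;v,u))$, with the splitting property checked by setting the horizontal slot to zero, linearity read off from bilinearity of $\fs{B}_U$, and $\mathtt{Inv}$-invariance identified with symmetry of $\fs{B}_U$; the converse likewise extracts a symmetric bilinear family from the local form of $\eu{C}$ and reconstitutes the spray. The only cosmetic difference is that the paper threads both directions through the connection map $K$ of Theorem~\ref{th:con} (factoring $K=\eu{K}\circ\eu{C}$ and $K=\eu{C}_2\circ\eu{C}$), whereas you work with $\eu{C}$ directly and make the chart-compatibility check via~\eqref{gat:1} explicit, which the paper leaves implicit.
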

\begin{proof}
	Let $ K: T(T\fs{M}) \to \fs{M} $ be the associated connection map of $ \s $.  Consider a chart $ U $ of $ \fs{M}$, where $ \fs{B}_U $ is the associated symmetric bilinear mapping over $ U $. Let $ K_U(x,v,u,w) = (x,w-\fs{B}_U(x;v,u))$ be the local representation of $ K $ in $ U $. 
	Define locally the mapping
	\begin{gather}
		\eu{K}: \mathbf{V}T\fs{M} \to T\fs{M},\\ \nonumber
		\eu{K}(x,u,0,v)= (x,v), \quad \forall (x,v) \in U \times \fs{F}.
	\end{gather}
	Since $ K $ is a smooth bundle morphism, taking into account the definition of $\eu{K}$, it follows that $ K $ 
	uniquely factors  into $ K= \eu{K} \circ \eu{C} $, where $ \eu{C} : T(T\fs{M}) \to \mathbf{V}(T\fs{M}) $
	is a smooth morphism which is locally given by
	\begin{equation*}
		\eu{C}(x,u,v,w)=(x,u,0,w-\fs{B}_U(x)(v,u))).
	\end{equation*}
	Let $  v= 0$, and thus $ \eu{C} \circ \eu{I} = \Id_{\mathbf{V}T\fs{M}} $, hence $ \eu{C} $ is a connection.
	The linearity of $ \fs{B}_U $ with respect to the second variables implies that $\eu{C} $ is linear on the fibers of $ T(T\fs{M}) $, and therefore $ \eu{C} $ is linear. Also, since $ \fs{B}_U $ is symmetric we have
	\begin{equation*}
		\eu{C}(x,u,v,w) = \eu{C}(x,v,u,w),
	\end{equation*}
	which implies $ \eu{C} \circ \mathtt{Inv} = \eu{C} $,  because the mapping $ \mathtt{Inv} $ switches the middle coordinates.
	Thus, $ \eu{C} $ is symmetric. 
	
	Conversely, suppose that $ \eu{C} $ is a linear symmetric connection. By definition $ \eu{C} $ is the left splitting of the exact sequence \eqref{eq:exact} such that
	\begin{equation*}
		\eu{C} \circ \eu{I} =\Id_{\mathbf{V}T\fs{M}}.
	\end{equation*}
	
	Let $ \mathrm{Pr}_2 $ be the projection of $ T\fs{M} \times T\fs{M} $ onto the second factor. By using the   identification  $ \Bbbk: \mathbf{V}T\fs{M} \simeq T\fs{M} \times  T\fs{M}$, we can define a mapping
	$ \eu{C}_2 :  \mathbf{V}T\fs{M} \to T\fs{M} $ with $ \eu{C}_2 = \mathrm{Pr}_2 \circ \Bbbk   $, such that
	$ (\eu{C}_2, \pr_{\fs{M}}) $ is a vector morphism between $ \mathbf{V}T\fs{M} $ and $ T\fs{M} $.
	Define the mapping 
	\begin{equation*}
		K \coloneq \eu{C}_2 \circ \eu{C} : T(T\fs{M}) \to T\fs{M}.
	\end{equation*}
	It is evident that
	$ v \in T\fs{M} $ is horizontal ($ v \in \mathbf{H}T\fs{M} $)
	if and only if
	\begin{equation}\label{eq:cm}
		\ker K = \mathbf{H}T\fs{M}.
	\end{equation}
	We will determine the local representation of $ K $. Let $ (U,\phi) $ be a local trivialization of $ \fs{M} $.
	The exact sequence $ \eqref{eq:exact} $ locally turns into
	\begin{equation*}
		0 \to \phi(U) \times \fs{F} \times \set{0} \times \fs{F} \to \phi(U) \times \fs{F} \times \fs{F} \times \fs{F} \to
		\phi(U) \times \fs{F} \times \set{0} \times \fs{F} \to 0,
	\end{equation*}
	so that $ \eu{I}: (x,v,0,w) \mapsto  (x,v,0,w)  $,  $ \pr_{\fs{M}}! (x,u,v) \mapsto (x,u,w) $, and
	\begin{equation*}
		\Bbbk : (x,u,0,w) \mapsto (x,u,w).
	\end{equation*}
	Locally, $ \eu{C} $ maps $ (x,u,v,w)$ to $(x,u,0, \eu{C}_U(x,u,v,w)) $,
	where $ \eu{C}_U $ exhibit symmetry with respect to $ u  $ and $ v $ and linearity with respect to $ u, v $ and $ w $.
	
	The equality $ \eu{C} \circ \eu{I} = \Id_{\mathbf{V}T\fs{M}} $ implies that $ \eu{C}_U(x,u,0,w) =w $. Now,   define
	\[
	\widehat{\eu{C}}_U (x,u,v,w)\coloneq \eu{C}(x,u,v,w)-w
	\]
	which does not depend on $ w $, and
	\begin{equation*}
		\widehat{\eu{C}}_U(x,u,v,w_1)-\widehat{\eu{C}}_U(x,u,v,w_2) = \widehat{\eu{C}}_U(x,u,0,w_1-w_2).
	\end{equation*}
	Let $ K_U  $ be the local representation of $ K $ over $ U $. Then,
	\begin{gather*}
		K_U(x,u,v,w) = (\mathrm{Pr}_2 \circ \Bbbk )(x,u,0,\eu{C}_U(x,u,v,w)) = (x, \eu{C}_U(x,u,v,w)).
	\end{gather*}
	However, $ \eu{C}_U(x,u,v,w) = w - \widehat{\eu{C}}_U(x,u,v)  $, where $ \widehat{\eu{C}}_U $
	is symmetric and linear in $ u $ and $ v $. 
	
	Thus, we can write
	\begin{equation*}
		K_U(x,u,v,w) = (x,w- \fs{B}_U(x)(u,v)),
	\end{equation*}
	where $ \fs{B}_U : \phi_U(U) \times \fs{F}  \times \fs{F} \to  \fs{F} $
	is a smooth map, and  symmetric and linear in $ u $ and $ v $.
	This means $ K $ is a connection map, and $ \fs{B}_U  $ induces a spray whose connection map is $ K $.
\end{proof}
\begin{remark}
	As mentioned in the introduction, this result has been obtained in various settings, particularly in \cite{im}, for the space of smooth functions between smooth manifolds using an approach based on the theory of schemes.
	However, the space of smooth functions between smooth manifolds can also be endowed with the structure of \fr manifolds. Thus,  Theorem  \ref{th:ch} can be applied to provide an alternative proof of the result in \cite{im}.
\end{remark}	
Next, we show that any linear connection  on $ T\fs{M} $ induces a linear connection on $ T^2\fs{M} $ and vice versa.
To establish this, we rely on the following lemma, which was proven in \cite{duc} for finite-dimensional manifolds, and the proof in our framework is nearly identical.

Let $ \fs{V}_1 $ and $ \fs{V}_2 $ be smooth vector bundles over $ \fs{M} $, and $ \va: \fs{M} \to \fs{M} $ a $ C^1 $-mapping. A mapping $ \upphi : \fs{V}_1 \to \fs{V}_2 $ is called a $ \va $-morphism if for any 
$ x \in \fs{M} $, there exists a vector bundle chart $ (U, \psi) $ of $ \fs{V}_1 $ at $ x $, a vector bundle chart
$ (V, \uppsi) $ of $ \fs{V}_2 $ at $ \va{(x)} $, and a $ C^1 $-mapping $ \Phi_{VU} : (U \cap \va^{-1}(U)) \times \fs{F} \to \fs{F} $ such that 
$$ \upphi_x \circ \psi^{-1}  = \uppsi^{-1}_{\va(x)} \circ \Phi_{VU}, \quad \forall x \in U \cap \va^{-1}(V).$$
\begin{lemma}\label{lem:2}
	Let $ \upphi: \fs{V}_1 \to \fs{V}_2  $ be a $ \va $-morphism that is isomorphism on fibers. Then, 
	$ T_u \fs{V}_1 \simeq T_{\upphi{(u)}} \fs{V}_2 $ for any $ u \in U $, $ U$ is a chart for $ \fs{M} $. Furthermore, 
	any $($linear symmetric$)$ connection on $ \fs{V}_2 $ induces a $($linear symmetric$)$ connection on $ \fs{V}_2 $.
\end{lemma}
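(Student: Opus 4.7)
My plan is to pass to local trivializations, read off $T\upphi$ in coordinates, and then transplant the connection from $\fs{V}_2$ to $\fs{V}_1$ via this tangent map.

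First, using the defining property of a $\va$-morphism, choose bundle charts $(U,\psi)$ on $\fs{V}_1$ at $x$ and $(V,\uppsi)$ on $\fs{V}_2$ at $\va(x)$ so that $\upphi$ is locally represented by $(x,v)\mapsto(\va(x),\Phi_{VU}(x,v))$. Differentiating, the local form of $T_u\upphi$ becomes
\begin{equation*}
T_u\upphi(h,k)=\bigl(\dd\va(x)h,\;\dd_1\Phi_{VU}(x,v)h+\dd_2\Phi_{VU}(x,v)k\bigr).
\end{equation*}
The fiberwise isomorphism assumption forces $\dd_2\Phi_{VU}(x,v)\in\mathrm{Iso}(\fs{F})$, and since $\va$ is a local diffeomorphism on each chart where the $\va$-morphism structure lives, $\dd\va(x)$ is an isomorphism as well; the upper-triangular block matrix is therefore invertible, giving the first assertion $T_u\fs{V}_1\simeq T_{\upphi(u)}\fs{V}_2$.

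For the second assertion, given a connection $\eu{C}_2:T(T\fs{V}_2)\to\mathbf{V}T\fs{V}_2$ (a left splitting of the exact sequence \eqref{eq:exact} for $\fs{V}_2$), I would form the tangent of $\upphi$ at the level of $T\fs{V}_i$, obtaining $T(T\upphi):T(T\fs{V}_1)\to T(T\fs{V}_2)$, and define
\begin{equation*}
\eu{C}_1:=(T(T\upphi))^{-1}\circ\eu{C}_2\circ T(T\upphi),
\end{equation*}
after noting that $T(T\upphi)$ carries $\mathbf{V}T\fs{V}_1$ onto $\mathbf{V}T\fs{V}_2$ (because $\upphi$ restricts to an isomorphism on fibers and hence $T\upphi$ sends the vertical subspace isomorphically to the vertical subspace). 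The splitting identity $\eu{C}_1\circ\eu{I}_1=\Id_{\mathbf{V}T\fs{V}_1}$ is then immediate from the corresponding identity for $\eu{C}_2$. Linearity of $\eu{C}_1$ on the fibers of $T(T\fs{V}_1)\to T\fs{V}_1$ follows from the fact that in local charts $T(T\upphi)$ is fiberwise affine with an invertible linear part, so conjugating a fiberwise linear map by it yields a fiberwise linear map.

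For symmetry, I would invoke the naturality of the canonical involution: in the local charts above, $\mathtt{Inv}$ switches the two middle coordinates, and direct inspection of the local formula for $T(T\upphi)$ shows that $T(T\upphi)\circ\mathtt{Inv}_1=\mathtt{Inv}_2\circ T(T\upphi)$, so $\eu{C}_2\circ\mathtt{Inv}_2=\eu{C}_2$ transports to $\eu{C}_1\circ\mathtt{Inv}_1=\eu{C}_1$. The main obstacle, and the one worth checking carefully, is precisely this compatibility of $T(T\upphi)$ with the canonical involutions; this relies on the fact that $\Phi_{VU}(x,\cdot)$ is \emph{linear} in the fiber variable, which forces the mixed second derivative $\dd_1\dd_2\Phi_{VU}(x,v)$ to equal $\dd_2\dd_1\Phi_{VU}(x,v)$ and hence to be symmetric in the two $h$-type slots that $\mathtt{Inv}$ interchanges. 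Once this symmetry is verified, linearity and symmetry of $\eu{C}_1$ follow at once, completing the proof.
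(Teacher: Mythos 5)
Your construction is the same one the paper uses---transport the left splitting by conjugation with the tangent map of $\upphi$, after checking that this tangent map restricts to an isomorphism of the vertical subbundles---but as written it is pitched one tangent level too high. A connection on the vector bundle $\fs{V}_2\to\fs{M}$ in this paper's sense is a left splitting $\eu{C}_2 : T\fs{V}_2 \to \mathbf{V}\fs{V}_2$ of the sequence $0\to\mathbf{V}\fs{V}_2\to T\fs{V}_2\to\pr_{\fs{V}_2}^{*}T\fs{M}\to 0$; your $\eu{C}_2 : T(T\fs{V}_2)\to\mathbf{V}T\fs{V}_2$ is a connection on the tangent bundle of the total space $\fs{V}_2$, which is a different object (in the application of Theorem \ref{th:ind} one has $\fs{V}_2=T\fs{M}$, so the relevant splitting already lives on $T(T\fs{M})$, whereas your formula would sit on $T(T(T\fs{M}))$). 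Consequently the conjugating map should be $T\upphi : T\fs{V}_1\to T\fs{V}_2$, not $T(T\upphi)$. The paper sets $\eu{C}_1 := (T\upphi\mid_{\mathbf{V}\fs{V}_1})^{-1}\circ\eu{C}_2\circ T\upphi$, and note that it inverts only the restriction of $T\upphi$ to the vertical subbundle, which is an isomorphism by the fiberwise-isomorphism hypothesis alone. Your global inverse $(T(T\upphi))^{-1}$ forces you to claim that $\va$ is a local diffeomorphism, which is not part of the definition of a $\va$-morphism ($\va$ is only assumed to be $C^1$); that claim is genuinely needed for the first assertion $T_u\fs{V}_1\simeq T_{\upphi(u)}\fs{V}_2$ (and is left implicit in the paper's proof as well), but it is not needed for the second assertion once you invert only on the verticals.

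On the positive side, your verification of fiberwise linearity and of symmetry via compatibility of the tangent map with the canonical involution $\mathtt{Inv}$ is more careful than the paper's proof, which simply asserts that the conjugated map is a left splitting and says nothing about the symmetric case at all; once the level shift is corrected and the inverse is restricted to the vertical subbundle, that part of your argument is a genuine improvement in completeness.
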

\begin{proof}
	From the identity $ T\pr_{\fs{V}_2} \circ T \upphi = T \va \, \circ T \pr_{\fs{V}_2} $ and the assumption that $ \upphi $ is an isomorphism on fibers, it follows that $ T_u \fs{V}_1 \simeq T_{\upphi{(u)}} \fs{V}_2 $ for any $ u \in U $.
	Now, let $ \eu{C}_2 $ be the left splitting that determines the (linear symmetric) connection on $ \fs{V}_2 $. Let
	\begin{equation*}
		\eu{C}_1 \coloneq (T\phi\mid_{\mathbf{V}\fs{V}_1})^{-1} \circ \eu{C}_2 \circ T \upphi ,
	\end{equation*}
	then it is a left splitting
	for the exact sequence associated with $ \fs{V}_1 $, i.e., a (linear symmetric) connection on $ \fs{V}_1 $. 
\end{proof}
\begin{theorem}\label{th:ind}
	Suppose $\s  $ is a spray on $ \fs{M} $. Any linear symmetric connection on $ T\fs{M} $ induces a linear symmetric connection on the vector bundle $ T^2\fs{M} $, and vice versa.
\end{theorem}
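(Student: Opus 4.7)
The plan is to exploit the vector bundle isomorphism $\Theta \colon T^2\fs{M} \simeq T\fs{M} \oplus T\fs{M}$ over $\fs{M}$ that the spray $\s$ makes available through the theorem preceding this one, and to combine it with Lemma \ref{lem:2} so as to transport linear symmetric connections back and forth. By Theorem \ref{th:ch}, a linear symmetric connection on $T\fs{M}$ is in bijective correspondence with a spray, so in the forward direction I may safely assume the given data is $\s$ together with its associated linear symmetric connection $\eu{C}$ on $T\fs{M}$.

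For the forward direction I would first form the direct-sum connection $\eu{C} \oplus \eu{C}$ on $T\fs{M} \oplus T\fs{M}$: its splitting of the exact sequence associated to $T\fs{M} \oplus T\fs{M}$ is simply the direct sum of the splittings for the two factors, and both linearity on fibres and symmetry with respect to the canonical involution descend componentwise from $\eu{C}$. Then I would apply Lemma \ref{lem:2} with $\fs{V}_1 = T^2\fs{M}$, $\fs{V}_2 = T\fs{M}\oplus T\fs{M}$, $\va = \id_{\fs{M}}$ and $\upphi = \Theta$; since $\Theta$ is a vector bundle isomorphism over $\fs{M}$ it is in particular an $\id_{\fs{M}}$-morphism that is an isomorphism on each fibre, so the lemma yields a linear symmetric connection on $T^2\fs{M}$ given explicitly by
\[
\eu{C}' \coloneq (T\Theta \mid_{\mathbf{V}T^2\fs{M}})^{-1} \circ (\eu{C} \oplus \eu{C}) \circ T\Theta .
\]

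For the converse I proceed symmetrically: given a linear symmetric connection $\eu{C}''$ on $T^2\fs{M}$, the lemma applied to $\Theta^{-1}$ produces a linear symmetric connection on $T\fs{M} \oplus T\fs{M}$, whose restriction to either direct summand is a linear symmetric connection on $T\fs{M}$. By Theorem \ref{th:ch} this restriction is associated to a unique spray, and a chart-wise comparison with the spray we began with shows that the two constructions are mutual inverses, which secures the ``and vice versa'' clause.

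The principal difficulty I anticipate lies in verifying that symmetry with respect to the canonical involution $\mathtt{Inv}$ is genuinely preserved under $T\Theta$: the involutions on $T(T^2\fs{M})$ and on $T(T\fs{M}\oplus T\fs{M})$ are defined intrinsically, so they must be identified through $\Theta$ before the symmetry of $\eu{C} \oplus \eu{C}$ can be pushed forward. Fortunately, the explicit local form of $\Theta$ provided by the map $\lambda_U$ in the discussion preceding \eqref{eq:find} involves subtracting the symmetric bilinear mapping $\fs{B}_U(x)\bigl((\phi\circ\gamma)'(0),(\phi\circ\gamma)'(0)\bigr)$, and it is precisely the symmetry of $\fs{B}_U$ in its two arguments that matches the middle-coordinate swap realising $\mathtt{Inv}$ on both sides. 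Carrying out this identification in a chart is where the bulk of the computation will sit; once it is done, the rest of the argument reduces to the two applications of Lemma \ref{lem:2} outlined above.
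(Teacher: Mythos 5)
Your proposal is correct in substance but takes a genuinely different route from the paper's. Both arguments hinge on Lemma \ref{lem:2} and on the spray-induced identification of $T^2\fs{M}$ with $T\fs{M}\oplus T\fs{M}$, but you route everything through that identification: forward, you form the direct-sum connection $\eu{C}\oplus\eu{C}$ on $T\fs{M}\oplus T\fs{M}$ and pull it back along $\Theta$; backward, you push the given connection along $\Theta^{-1}$ and restrict to a summand. The paper instead works inside $T(T\fs{M})$: for the forward direction it restricts the splitting $\eu{C}\colon T(T\fs{M})\to\mathbf{V}T\fs{M}$ to the submanifold $T^2\fs{M}\subset T(T\fs{M})$ and feeds the resulting morphism $T^2\fs{M}\to T\fs{M}$ into Lemma \ref{lem:2}; for the converse it manufactures the required fibrewise isomorphism by composing with the projections coming from the splitting $\Upsilon=(\pr_{T\fs{M}},\overline{\mathcal{H}},K)$ and the map $\mathbf{K}=T\pr_{\fs{M}}\circ K$. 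Your version is more symmetric (one application of Lemma \ref{lem:2} with $\Theta$, one with $\Theta^{-1}$) and makes the dependence on the spray-induced vector bundle structure explicit; the paper's version avoids having to discuss connections on the auxiliary bundle $T\fs{M}\oplus T\fs{M}$ at all. Three cautions on your write-up. First, drop the opening reduction via Theorem \ref{th:ch}: the statement concerns an \emph{arbitrary} linear symmetric connection $\eu{C}$ on $T\fs{M}$, and $\s$ serves only to fix the vector bundle structure on $T^2\fs{M}$; assuming $\eu{C}$ is the connection associated with $\s$ would lose generality (your construction never actually uses that assumption, so this is cosmetic). Second, the canonical involution $\mathtt{Inv}$ exists only on the iterated tangent bundle $T(T\fs{M})$; there is no intrinsic involution on $T(T^2\fs{M})$ or on $T(T\fs{M}\oplus T\fs{M})$, so ``symmetric'' for the induced connections must be understood by transport of structure --- a point the paper's Lemma \ref{lem:2} also leaves implicit --- after which your chart computation with the symmetric bilinear map $\fs{B}_U$ is exactly the right verification. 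Third, restricting a connection on $T\fs{M}\oplus T\fs{M}$ to a direct summand is not itself an instance of Lemma \ref{lem:2}, since the inclusion of a summand is not a fibrewise isomorphism; that step needs its own (easy) verification using the projection onto the summand.
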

\begin{proof}
	By Theorem \ref{th.iso}	the spray $ \s $ induces a vector bundle structure on $ T^2\fs{M} $. Let $ K $ be the associated connection map of $ \s $. 
	
	Suppose that  $ \eu{C} $ is a linear symmetric connection on $ T\fs{M} $.
	The restriction of $ \eu{C} $ to $ T^2\fs{M} $ ($ T^2 \fs{M} $ is a submanifold of $ T(T\fs{M}) $) is a morphism
	of the vector bundles $ T^2\fs{M} $ and $ T\fs{M} $ that is isomorphism on fibers. Therefore, Lemma
	\ref{lem:2} implies that  $ \eu{C} $ induces a linear symmetric connection on $ T^2\fs{M} $. 
	
	Conversely, let $ \mathcal{C} $ be a linear connection on $ T^2\fs{M} $, and define the mapping  
	\begin{equation*}
		\mathcal{C}_1 \coloneq \mathcal{C} \circ \pr_{T^2\fs{M}} : T( T^2\fs{M}) \to T\fs{M}.
	\end{equation*}
	Consider the diffeomorphism $ \Upsilon : T(T\fs{M}) \to 
	T\fs{M} \oplus T\fs{M} \oplus T\fs{M}$ defined in Equation \eqref{eq:iso} which induces an isomorphism on fibers.
	The second-order tangent bundle $ T^2\fs{M} $ is a submanifold of $ T(T\fs{M}) $ consisting of vectors $ v $ such that 
	\[
	\pr_{T\fs{M}}(v) = T\pr_{\fs{M}}(v).
	\]
	Since \(\Upsilon  |_{T\fs{M} \oplus T\fs{M} }\)
	is  the natural isomorphism  onto
	$ (\pr_{T\fs{M}} \oplus K \oplus T\pr_{\fs{M}})(T^2\fs{M}) $, it follows that
	\begin{equation*}
		\Upsilon^{-1} \circ (\pr_{T\fs{M}} \oplus K \oplus T\pr_{\fs{M}})(T^2\fs{M}) = T\pr_{\fs{M}} \oplus K (T^2\fs{M}),
	\end{equation*}
	and, therefore,
	\[
	\mathbf{K} \coloneq T\pr_{\fs{M}} \circ K : T^2\fs{M} \to \fs{M}
	\]
	is a diffeomorphism, and determines the vector bundle charts. 
	
	Let $ \varPi : T\fs{M} \oplus T\fs{M} \to T\fs{M} $ be the projection. Then, $  \varPi\circ \mathbf{K} \circ \mathcal{C}_1  $ gives the morphism of the bundles $ T^2\fs{M} $ and $ T\fs{M} $ 
	which is an isomorphism on fibers. Therefore, Lemma
	\ref{lem:2} implies that  $ \mathcal{C} $ induces a connection on $ T\fs{M} $. 
\end{proof}
Next, we define a tangent structure as a straightforward generalization from finite-dimensional manifolds and use it to characterize  connections.

The vertical space $ \mathbf{V}_xT\fs{M} $ at $ v \in T \fs{M} $ is naturally identified with $ T_x\fs{M} $, $ x=\pr_{\fs{M}}(v) $. Define the mapping
\begin{gather}\label{eq:verlift}
	\mathtt{Ver}_v: T_x\fs{M} \to \mathbf{V}_vT\fs{M}  \\ \nonumber
	\mathtt{Ver}_v(w) = \derat0 v + t w, \quad  w \in T_x\fs{M}.
\end{gather}
The mapping $ \mathtt{Ver}_v $ is a linear isomorphism for any $ v \in T\fs{M} $ and is called a vertical lift. Since
$ \ker K = \mathbf{H}T\fs{M} $ (Equation \eqref{eq:cm}), we can easily show that for each $ v \in T_x\fs{M} $, the connection map $ K $ maps $ T_vT\fs{M} $ to $ T_{\pr_{\fs{M}}(v)} \fs{M}$, and satisfies
\begin{equation}\label{eq:inco}
	K (\mathtt{Ver}_v(w)) = w, \quad \forall w \in T_{\pr_{\fs{M}}(v)} \fs{M}.
\end{equation}
Since, for $ h \in T\fs{M} $, the mapping $ \Pi_{\mathbb{M}} \mid_{\mathbf{H}_hT\fs{M}} $ is isomorphism, and
\begin{equation*}
	T\pr_{\fs{M},x} : T_hT\fs{M} \to T_x\fs{M} (x =\pr_{T\fs{M}}(h))
\end{equation*}
is an epimorphism, then
$ T\pr_{\fs{M},h} \mid_{\mathbf{H}_hT\fs{M}} : \mathbf{H}_hT\fs{M} \to T_x\fs{M} $ is an isomorphism. The inverse of the latter isomorphism $ \mathtt{Hor}_h : T_x\fs{M} \to \mathbf{H}_hT\fs{M} $ is called horizontal lift induced 
by the connection.
Now, define the mapping
\begin{gather}
	\jmath : \pr^*_{T\fs{M}} \to \mathbf{V}T\fs{M} \\ \nonumber
	\jmath(x_1,x_2) = \mathsf{Ver}_{x_1}(x_2),\; x_1,x_2 \in T\fs{M},\; \pr_{\fs{M}}(x_1) = \pr_{\fs{M}}(x_2).
\end{gather}
For $ u \in T\fs{M} $, define the mapping $ \eu{J}_u : T_uT\fs{M} \to  \mathbf{V}_uT\fs{M}$ by 
\begin{equation*}
	\eu{J}_u \coloneq \eu{I}_u \circ \jmath_u \circ \pr_{\fs{M},u}!.
\end{equation*}
The mapping $ \eu{J} $, called the tangent structure, can be considered as a  $ \mathcal{E}^{\infty}(\fs{M}) $-linear mapping from $ \mathsf{V}(T\fs{M}) $ to itself, defined by 
\begin{equation*}
	\eu{J}(V)(u) = \eu{J}_u(V_{\pr_{T\fs{M}}(u)})\quad \text{for } V \in  \mathsf{V}(T\fs{M}), \; u \in T\fs{M}.
\end{equation*}
Directly, from the definition it follows that $ \eu{J}^2 =0 $, and 
\begin{equation*}
	\Img (\eu{J}) = \ker (\eu{J}) =  \mathsf{Sec} (\mathbf{V}T\fs{M}) (\text{the space of smooth sections}).
\end{equation*}
The horizontal and vertical lifts are related by 
\begin{equation}\label{eq:impor}
	\eu{J}_u \circ \mathtt{Hor}_u = \mathtt{Ver}_u,
\end{equation} 
that is the following diagram is commutative: 
\[\begin{tikzcd}
	{T_uT\fs{M}} & {} & {T_uT\fs{M}} \\
	& {T_{\Pi_\fs{M}(u)}\fs{M}}
	\arrow["{\eu{J}_u}", from=1-1, to=1-3]
	\arrow["{\mathtt{Ver}_u}", from=1-3, to=2-2]
	\arrow["{T\Pi_{\fs{M},u}}"', from=1-1, to=2-2]
\end{tikzcd}\]
Compose  Equation \eqref{eq:impor} from the left by $K_u $ so $ K_u \circ \eu{J}_u \circ \mathtt{Hor}_u =  K_u \circ\mathtt{Ver}_u  $. By \eqref{eq:inco}, the right side is the identity $ \id_{T_{\pr_{\fs{M}}(v)} \fs{M}} $. 
Then, since $ \mathtt{Hor}_u $ is a linear isomorphism, it follows that $ K_u \circ \eu{J}_u $ is its inverse, and hence we obtain
\begin{equation}\label{eq:last}
	K_u \circ \eu{J}_u = T\pr_{\fs{M},u}.
\end{equation}
Let $ \mathsf{V}(\fs{M}) $ and $ \mathsf{V}(T\fs{M}) $ be the modules of smooth vector fields on $ \fs{M} $ and
$ T\fs{M} $, respectively. Let $ \eu{C} $ be a linear symmetric connection on $ T\fs{M} $. It induces a morphism
$ \widehat{\eu{C}} : \mathsf{V}(\fs{M}) \to \mathsf{Sec} (\mathbf{V}T\fs{M}) $.
Define the mapping
\begin{gather}
	\mathtt{Vp} : \mathsf{V}(T\fs{M})  \to \mathsf{V}(T\fs{M}) \\ \nonumber
	\mathtt{Vp}(v) \coloneq
	\begin{cases}
		1 & \text{if } \widehat{\eu{C}}(v) \;\text{is vertical}\\
		0 & \text{if } v \;\text{is horizontal}.
	\end{cases}
\end{gather}
The definition of $ \mathtt{Vp} $ directly implies that $\mathtt{Vp}^2=\mathtt{Vp}  $.

Now, define
$ \mathtt{Hp} \coloneq \id_{\mathbf{V}T\fs{M}} -  \mathtt{Vp} $ so that we have $\mathtt{Hp}^2=\mathtt{Hp}  $.
We call $ \mathtt{Vp}  $  ($ \mathtt{Hp}  $) the vertical  (horizontal) projector associated with $ \eu{C} $.
\begin{theorem} \label{th:noncon}
	Any linear symmetric connection on the tangent bundle determines a  connection map associated with a spray, and vice versa. 
\end{theorem}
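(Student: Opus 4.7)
The plan is to exhibit the bijection directly using the horizontal/vertical decomposition and the tangent structure $\eu{J}$ developed immediately before the statement, so that the correspondence is natural and does not depend on going through sprays a second time. I will produce two inverse constructions and then verify they are mutually inverse.

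For the forward direction, suppose $\eu{C}$ is a linear symmetric connection on $T\fs{M}$, with associated vertical and horizontal projectors $\mathtt{Vp}, \mathtt{Hp}$. For $u \in T\fs{M}$ and $v \in T_uT\fs{M}$, the image $\mathtt{Vp}(v) \in \mathbf{V}_uT\fs{M}$, so the vertical lift isomorphism $\mathtt{Ver}_u : T_{\pr_{\fs{M}}(u)}\fs{M} \to \mathbf{V}_uT\fs{M}$ can be inverted, and I set
\begin{equation*}
K(v) \coloneq \mathtt{Ver}_u^{-1}\bigl( \mathtt{Vp}(v) \bigr).
\end{equation*}
I will check that $K$ is a $C^{k-1}$ vector bundle morphism over $\pr_{\fs{M}}$ by writing it in a chart. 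Linearity on the fibres of $T(T\fs{M})$ follows from the linearity of $\eu{C}$, and the symmetry of $\eu{C}$ (its invariance under the canonical involution $\mathtt{Inv}$, which swaps the middle coordinates) yields symmetry of $K$ in the variables $u$ and $v$ at the local level. To show $K$ is a genuine connection map in the sense of Theorem \ref{th:con}, I will verify $\nabla = K \circ T$, where $\nabla$ denotes the covariant derivative associated to the spray produced from $\eu{C}$ by Theorem \ref{th:ch}. Compatibility is governed by \eqref{eq:last} and \eqref{eq:inco}, which already express $K \circ \eu{J} = T\pr_{\fs{M}}$, the defining property of a connection map applied to vertical vectors.

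For the reverse direction, suppose $K : T(T\fs{M}) \to T\fs{M}$ is a connection map. Set $\mathbf{H}T\fs{M} \coloneq \ker K$, so by \eqref{eq:cm} we get the splitting $T(T\fs{M}) = \mathbf{H}T\fs{M} \oplus \mathbf{V}T\fs{M}$. Define $\eu{C} : T(T\fs{M}) \to \mathbf{V}T\fs{M}$ to be the projection onto the vertical summand. Then $\eu{C} \circ \eu{I} = \Id_{\mathbf{V}T\fs{M}}$, so $\eu{C}$ is a left splitting of the exact sequence \eqref{eq:exact}. Linearity on fibres and symmetry (invariance under $\mathtt{Inv}$) transfer from $K$ to $\eu{C}$ using the local expression $K_U(x,u,v,w) = (x, w - \fs{B}_U(x)(u,v))$ with $\fs{B}_U$ symmetric and bilinear, as established in the proof of Theorem \ref{th:ch}; indeed the same computation shows $\eu{C}$ takes the local form derived there.

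Finally, I will verify that these two assignments are mutual inverses: starting from $\eu{C}$ and applying the two constructions in sequence reproduces $\eu{C}$, because the vertical projector of the reconstructed splitting coincides with $\mathtt{Vp}$ on both summands; and conversely, starting from $K$ one recovers the same horizontal subbundle $\ker K$, hence the same $K$ upon composition with $\mathtt{Ver}_u^{-1}$. The step I expect to require the most care is the symmetry transfer: showing that $\eu{C} \circ \mathtt{Inv} = \eu{C}$ is equivalent to the symmetry of $\fs{B}_U$ in its two vector arguments, since $\mathtt{Inv}$ acts on the local model $(x,u,v,w)$ by swapping $u$ and $v$. Once this is made precise in a chart, together with a smoothness check of the change of chart (controlled by \eqref{gat:1}), the bijection is established globally and the theorem follows.
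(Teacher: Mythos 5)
Your proposal is correct and follows essentially the same route as the paper: the paper's forward construction $K_u = T\pr_{\fs{M},u}\circ\eu{L}_u$ with $\eu{L}_u = (\eu{J}_u\mid_{\mathbf{H}_uT\fs{M}})^{-1}\circ\eu{C}_u$ coincides, via the identity $\eu{J}_u\circ\mathtt{Hor}_u = \mathtt{Ver}_u$ and $T\pr_{\fs{M}}\circ\mathtt{Hor}_u = \id$, with your $K = \mathtt{Ver}_u^{-1}\circ\mathtt{Vp}$. The converse direction (setting $\mathbf{H}_uT\fs{M} = \ker K_u$, using $K_u\circ\eu{J}_u = T\pr_{\fs{M},u}$ to get $\mathbf{H}_uT\fs{M}\oplus\mathbf{V}_uT\fs{M} = T_uT\fs{M}$, and taking $\eu{C}$ to be the vertical projection) is likewise the paper's argument.
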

\begin{proof}
	Let $ \eu{C} $ be a linear symmetric  connection, and let $ u \in T\fs{M} $. The mapping
	\begin{equation*}
		\eu{J}_u\mid_{\mathbf{H}_uT\fs{M}}: \mathbf{H}_uT\fs{M} \to \mathbf{V}_uT\fs{M}
	\end{equation*}
	is an isomorphism. Let $ \eu{L}_u $ be its inverse, and define its extension to $ T_uT\fs{M} $ by $ \eu{L} \coloneq \eu{L}_u \circ \eu{C}_u $.
	Directly from the definition, it follows that $ \eu{L}^2 = 0  $ and $ \Img \eu{L} = \ker \eu{L} = \mathbf{H}T\fs{M} $.
	Moreover, $ \eu{L} \circ \eu{J} =  \mathtt{Hp}$ and $ \eu{J} \circ \eu{L} = \mathtt{Vp} $. Now define the linear mapping
	\begin{gather*}
		K_u : T_uT\fs{M} \to T\pr_{\fs{M}(u)}\fs{M} \\
		K_u = T\pr_{\fs{M}(u)} \circ \eu{L}_u.
	\end{gather*}
	This is a connection map, to proof this we need to show that $ K_u \circ \eu{J}_u = T\pr_{\fs{M}(u)} $. Since $ \eu{L}_u \circ \eu{J} = (\mathtt{Hp})_u $
	and 
	\[
	T\pr_{\fs{M}(u)} \circ  (\mathtt{Hp})_u = T\pr_{\fs{M}(u)},
	\]	
	it follows that $ K_u \circ \eu{J}_u = T\pr_{\fs{M}(u)} $.
	
	Conversely, let $ K $ be a connection map associated with given spray \(\s\), and let $ u \in T\fs{M} $. Then, $ K_u \circ \eu{J}_u = T\pr_{\fs{M},u}$ (Equation \eqref{eq:last}).
	Thus, since $ T\pr_{\fs{M},u} $ is an epimorphism, $ K_u  $ is also an epimorphism. Let $ \mathbf{H}_u T\fs{M}\coloneq  \ker K_u $
	and $ \mathbf{V}_u \coloneq  \ker \eu{J}_U $. From $ K_u \circ \eu{J}_u = T\pr_{\fs{M},u}$  it follows that 
	\[
	\mathbf{H}_u T\fs{M} \cap \mathbf{V}_u T\fs{M} = \set{0},
	\]
	and hence 
	\[
	\mathbf{H}_u T\fs{M} \oplus \mathbf{V}_u T\fs{M} = T_uT\fs{M}.
	\]
	Therefore, there exists a connection which inherits linearity and symmetry   from
	the associated bilinear map   associated with  the spray \(\s\) .
\end{proof}
    
\bibliographystyle{amsplain}

\begin{thebibliography}{10}
 \bibitem{a3}
M. Aghasi and A. Suri, \textit{Splitting theorems for the double tangent bundles of \fr manifolds}, 
Balkan J. Geom. Appl. \textbf{15} (2010), 2, 1-13.
\bibitem{a2}
M. Aghasi, A. Bahari, C. Dodson,  G.  Galanis and A. Suri, 
\textit{Second order structures for sprays and connections on \fr manifolds}, http://arxiv.org/abs/0810.5261v1	
\bibitem{a}	
M. Aghasi, A. Bahari, C. Dodson,  G.  Galanis and A. Suri,  
\textit{Infinite-dimensional second order ordinary differential equations via $ T^2M $}, Nonlinear Analysis: Theory, Methods and Applications \textbf{67} (2007), 10, 2829-2838.
\bibitem{a5}	
M. Aghasi, A. Bahari, C. Dodson,  G.  Galanis and A. Suri, 
\textit{Conjugate connections and differential equations on infinite dimensional manifolds}. In : J. A. Alvarez Lopez and E. Garcia-Rio (Eds.), \textit{Differential geometry:  Proc VIII international colloquium on differential geometry}, pp. 227-236.
Santiago de Compostela,  2008.
\bibitem{amb}  
W. Ambrose, R. S. Palais and I. M. Singer, \textit{Sprays}, An. Acad. Brasil. Ci. \textbf{32} (1960), 163-178.   
\bibitem{ba}
W. Barthel, \textit{Nichtlineare Zusammenhänge und deren Holonomiegruppen}, J. Reine Angew. Math. \textbf{212} (1963), 120-149. 
\bibitem{mb}
M. Bunge,  \textit{On a synthetic proof of the Ambrose-Palais-Singer theorem for infinitesimally linear spaces}. Cahiers de Topologie et G\'{e}om\'{e}trie Différentielle Cat\'{e}goriques, \textbf{28} (1987), 2,  127-142. 
\bibitem{mb2}
M. Bunge and P. Sawyer, \textit{On connections, geodesics and sprays in synthetic differential geometry}, 
Cahiers de Topologie et G\'{e}om\'{e}trie Différentielle Cat\'{e}goriques, \textbf{25} (1984), 3, 221-257.
\bibitem{dod3}	
C. Dodson  and G. Galanis, \textit{Second order tangent bundles of infinite dimensional manifolds},
J. Geom.  Phys.  \textbf{52} (2004), 2, 127-136.
\bibitem{dod4}	
C. Dodson, G. Galanis, E. Vassiliou, \textit{A generalized second-order frame bundle for \fr manifolds},
J. Geom.  Phys. \textbf{55} (2005), 3, 291-305.
\bibitem{dod2}	
C. Dodson  and G. Galanis  and E. Vassiliou E,  \textit{Geometry in a \fr context: A projective limit approach}.  Cambridge University Press, Cambridge, 2015.
\bibitem{dod}
C. Dodson,  G. Galanis  and E. Vassiliou, \textit{Isomorphism classes for Banach vector bundle structures of second tangents}, Math. Proc. Cambridge Philos. Soc \textbf{141}, (2006), 3, 489-496.
\bibitem{dom} 
P. Dombrowski, \textit{On the Geometry of the Tangent Bundle}, {J. Reine Angew. Math.} \textbf{210} (1962), 73-88.
\bibitem{duc} 
T. Duc, \textit{Sur la g\'{e}om\'{e}trie diff\'{e}rentielle des fibr\'{e}s vectoriels}, 
{Kodai math. Sem.
	Rep.} \textbf{26} (1975), 349-408.
\bibitem{k6}
K. Eftekharinasab, \textit{The Morse-Sard-Brown Theorem for Functionals on Bounded-\fr-Finsler Manifolds}, 
Communications in Mathematics,  \textbf{23} (2015), 2, 101-112.	
\bibitem{k1}
K. Eftekharinasab, \textit{Geometry of bounded \fr manifolds},  {Rocky Mountain J. Math.} \textbf{46} (2016), 3, 895-913.
\bibitem{k3}
K. Eftekharinasab  and V. Petrusenko, \textit{Finserian geodesics on \fr manifolds}, 
{Bulletin of the Transilvania University of Barsov. Series III: Mathematics, Informatics, Physics}
\textbf{13} (2020), 1, 129-151.
\bibitem{eli}
H. Eliasson, \textit{Geometry of manifolds of maps}, {J. Differential Geom.} \textbf{1} (1967), 169-194.
\bibitem{sym}
R. Fisher and H. Taquer, \textit{Second order tangent vectors in Riemannian geometry}, {J. Korean Math. Soc.} \textbf{36} (1999), 5, 959-1008.
\bibitem{ga1}
G. Galanis, \textit{Universal connections in \fr principal bundles}, {Period Math. Hung.} \textbf{54}
(2007), 1, 1-13.
\bibitem{gl}
H. Gl{\"{o}}ckner, \textit{Aspects of differential calculus related to infinite-dimensional vector
	bundles and poisson vector spaces}, Axioms, \textbf{11} (2022), 5, 221.
\bibitem{hamilton}
R. Hamilton, \textit{The inverse function theorem of Nash and Moser}, {Bull. Amer. Math. Soc.} \textbf{7}  (1982), 1,
65-222.
\bibitem{lang}
S. Lang, \textit{Fundamentals of differential geometry}. Springer, New York,  1999.
\bibitem{km}
A. Kriegl and P. Michor, \emph{The convenient setting of global analysis}, Mathematical Surveys and Monographs 53, AMS, 1997.
\bibitem{im}
I. Moerdijk and E. Reyes, \textit{On the relation between connections and sprays},
Revista Colombina de Matem\'{a}ticas \textbf{20} (1986), 187-222.
\bibitem{neeb}
K-H. Neeb, \textit{ Towards a Lie theory of locally convex groups}, {Jpn. J. Math.} \textbf{1}  (2006), 2, 291-468.
\bibitem{su1}
A. Suri  and M. Moosaei, \textit{Bundle of frames and sprays for \fr Manifolds}, {Int. Electron. J. Geom.} \textbf{11}
(2018), 1, 1-16. 
\bibitem{vas}
E. Vassiliou, \textit{Transformations of linear connections}, {Period. Math. Hung.} \textbf{13} (1982), 4, 289-308.
\bibitem{vilms}
J. Vilms, \textit{Connections on tangent bundles}, {J. Differential Geom.} \textbf{1} (1967),  235-243.

\end{thebibliography}

\end{document}